\newtheorem{theorem}{Theorem}[section]
\newtheorem{proposition}{Proposition}[section]
\newtheorem{definition}{Definition}[section]
\newtheorem{corollary}{Corollary}[section]
\newtheorem{remark}{Remark}[section]
\def\Jc{{\mathcal J}}
\def\Lc{{\mathcal L}}
\def\Kc{{\mathcal K}}
\def\Ac{{\mathcal A}}
\def\Ic{\mathcal{I}}
\def\Hc{{\mathcal H}}
\begin{document}

\title[A Penrose inequality]{A Penrose inequality for  asymptotically locally hyperbolic graphs}

\author{Levi Lopes de Lima}
\address{Federal University of Cear\'a,
Department of Mathematics, Campus do Pici, Av. Humberto Monte, s/n, Bloco 914, 60455-760,
Fortaleza/CE, Brazil.}
\email{levi@mat.ufc.br}
\email{fred@mat.ufc.br}
\author{Frederico Gir\~ao}
\thanks{The first author was partially supported by Fapesp/SP Grant 2012/09925-4}

\subjclass[2010]{Primary: 53C21, Secondary: 53C80, 53C44, 53C42}

\begin{abstract}
We use the inverse mean curvature flow to prove a sharp Alexandrov-Fenchel-type inequality for a class of hypersurfaces in certain locally hyperbolic manifolds. As an application we derive an optimal Penrose inequality for asymptotically locally hyperbolic graphs in any dimension $n\geq 3$. When the horizon has the topology of a compact surface of genus at least one, this provides an affirmative answer, for this class of initial data sets, to a question posed by Gibbons, Chru\'sciel and Simon on the validity of a Penrose-type inequality for black hole solutions carrying a higher genus horizon.
\end{abstract}

\maketitle

\section{Introduction and statements of the results}
\label{intro}

Penrose-type inequalities  express the  widely accepted view in General Relativity that a black hole should contribute to the total mass of an isolated gravitational system through its size as measured by the area of its event horizon \cite{BC} \cite{Ma}. In the asymptotically flat case, which corresponds to the cosmological constant $\Lambda=0$, this inequality has been established for time-symmetric initial data sets by Huisken-Ilmanen \cite{HI} and Bray \cite{Br} in the physical dimension $n=3$ and by Bray-Lee \cite{BL} for $n\leq 7$. Recent contributions to the graph and conformally flat cases in all dimensions appear in \cite{L} \cite{dLG1} \cite{dLG2} \cite{FS} \cite{HW} \cite{MV} for the ADM mass and in \cite{GWW1} \cite{GWW2} \cite{LWX} for the Chern-Gauss-Bonnet mass.

Recently, there has been much interest in extending this type of result to asymptotically hyperbolic (AH) manifolds, corresponding to the case $\Lambda <0$.
In particular, the conjectured Penrose inequality has been established when the manifold can be appropriately embedded as a graph in hyperbolic space \cite{dLG3} \cite{dLG4}; see also \cite{DGS} for a previous contribution. The purpose of this note is to extend the method in \cite{dLG4} to asymptotically locally hyperbolic (ALH) graphs, thus proving an optimal Penrose inequality for such manifolds in any dimension $n\geq 3$; see Theorem \ref{theomain}.
In the physical dimension $n=3$, when the horizon has the  topology of a compact surface of genus $\gamma\geq 1$, our result provides, for this class of initial data sets, an affirmative answer (Corollary \ref{cormain}) to  a question posed by Chru\'sciel-Simon \cite{CS}  and Gibbons \cite{G} on the validity of a Penrose-type inequality for black hole solutions carrying a higher genus horizon; see also \cite{BC} \cite{Ma} for background on this problem. A key ingredient in the proof of Theorem \ref{theomain} is a sharp Alexandrov-Fenchel-type inequality for a class of hypersurfaces in certain locally hyperbolic manifolds; see Theorem \ref{aftype}. This result has an independent interest and its proof uses the inverse mean curvature flow and an integral inequality due to Brendle \cite{Br}.
In this regard we mention that lately there has been considerable progress in establishing Alexandrov-Fenchel-type inequalities in hyperbolic space; see for instance \cite{WX} and the references therein.

We now proceed to a detailed account of our main results. A mass-type invariant for ALH manifolds has been introduced by Chru\'sciel, Herzlich and Nagy \cite{CH} \cite{CN} \cite{H}. Their definition applies in particular to the class of ALH manifolds we consider here. We start by describing the corresponding {\em locally hyperbolic} (LH) reference metrics. Fix $\epsilon =-1, 0$ and let $(N^{n-1},h)$ be a closed space form of sectional curvature $\epsilon$. In the product manifold $P_\epsilon=I_\epsilon\times N$, where $I_{-1}=(1,+\infty)$ and $I_0=(0,+\infty)$, define the metric
\begin{equation}\label{half}
g_\epsilon=\frac{dr^2}{\rho_{\epsilon}(r)^2}+r^2h, \quad r\in I_\epsilon,
\end{equation}
where
\[
\rho_\epsilon (r)=\sqrt{r^2+\epsilon}.
\]
It is easy to check that $(P_\epsilon,g_\epsilon)$ is {locally hyperbolic} in the sense that its sectional curvature is constant and equal to $-1$.
For further reference we also note that the manifold
$Q_\epsilon=\mathbb R\times P_\epsilon$, endowed with the metric
\begin{equation}\label{aswe}
\overline g_\epsilon=\rho_\epsilon^2d\tau^2+g_\epsilon,
\end{equation}
is locally hyperbolic as well.
Moreover, if $\tau_0\in\mathbb R$ then the horizontal slice $P_{\epsilon}^{\tau_0}\subset Q_\epsilon$ given by $\tau=\tau_0$ is totally geodesic. This follows from the fact that the vertical vector field $\partial_t$ is Killing. In particular, each $P_\epsilon^{\tau_0}$ can be naturally identified to $P_\epsilon$.

Let $(M^n,g)$ be a complete $n$-dimensional Riemannian manifold, possibly carrying a compact inner boundary $\Sigma$. For simplicity, we assume that $M$ has a unique end, say $E$. Following \cite{CH} we say that $(M,g)$ is {\em asymptotically locally hyperbolic} (ALH) if there exists a chart $\Psi$ taking $E$ to the end of $P_\epsilon$ corresponding to $r=+\infty$
so that, as $r\to +\infty$,
\begin{equation}\label{alhcond}
\|\Psi_*g-g_{\epsilon}\|_{g_\epsilon}+ \|d\Psi_*g\|_{g_\epsilon}=O\left(r^{-\sigma}\right),
\end{equation}
for some $\sigma>n/2$. We also assume that $R_{\Psi_*g}+n(n-1)$ is integrable, where $R$ denotes scalar curvature. For this kind of manifold,  a mass-like invariant can be defined as
\begin{equation}\label{massl}
\mathfrak m_{(M,g)}=\lim_{r\to +\infty}c_n\int_{N_r}
  \left(\rho_\epsilon({\rm div}_{g_{\epsilon}}e-d{\rm tr}_{g_{\epsilon}}e)-i_{\overline \nabla\rho_\epsilon}e+
  ({\rm tr}_{g_{\epsilon}}e)d\rho_\epsilon\right)(\nu_r)dN_r,
\end{equation}
where $e=\Psi_*g-g_{\epsilon}$, $N_r=\{r\}\times N$, $\overline\nabla$ is the connection of $g_\epsilon$, $\nu_r$ is the {\em outward} unit vector to $N_r$, which means that $\nu_r$ points toward  the  end $r=+\infty$, and
\[
c_n=\frac{1}{2(n-1)\vartheta_{n-1}},
\]
where $\vartheta_{n-1}={\rm area}_{n-1}(N)$. We emphasize that a crucial point in justifying (\ref{massl}) is precisely to check that the right-hand side does not depend on the chart at infinity $\Psi$ satisfying (\ref{alhcond}). In any case, $\mathfrak m_{(M,g)}$ is an invariant of the asymptotic geometry of $(M,g)$ which measures the average rate of the convergence of $g$ to $g_\epsilon$ as $r\to +\infty$; details can be found in \cite{CH}.

\begin{remark}\label{epsilon1}
{\rm The above construction also applies to the case in which $\epsilon=1$ and $(N,h)$ is a round sphere, so that a mass invariant is also available. This is the asymptotically hyperbolic case, treated in \cite{dLG4}.}
\end{remark}

An ALH manifold $(M,g)$ as above can be thought of as the initial data set of a time-symmetric solution of the Einstein field equations with negative cosmological constant. The invariant $\mathfrak m_{(M,g)}$ is then interpreted as the total mass of the solution. Physical reasoning predicts  that $\mathfrak m_{(M,g)}$ should have the appropriate sign under the relevant dominant energy condition, namely, $R_g\geq -n(n-1)$. When $M$ carries a compact minimal horizon $\Sigma$ one expects the invariant to satisfy a Penrose-type inequality in the sense that  it should be bounded from below by a suitable expression involving the area $|\Sigma|$ of $\Sigma$. In order to figure out the correct form of this inequality, we consider the so-called {\em Kottler black hole metrics}, which are rather straightforward deformations of the LH metrics $g_\epsilon$ above.

Let us introduce a real parameter $m>0$
and consider the metric
\begin{equation}\label{bhmet}
g_{\epsilon,m}=\frac{dr^2}{\rho_{\epsilon,m}(r)^2}+r^2h,
\end{equation}
where
\[
\rho_{\epsilon,m}(r)=\sqrt{r^2+\epsilon-\frac{2m}{r^{n-2}}}.
\]
For each $m$ as above, it is easy to see that  the function
\[
r\mapsto f_{\epsilon,m}(r)=r^n+\epsilon r^{n-2}-2m
\]
is strictly positive for  $r>r_{\epsilon,m}$, where $r_{\epsilon, m}$ is the unique positive zero of $f_{\epsilon,m}$. A further analysis shows that the function $m\mapsto r_{\epsilon,m}$ is smooth and strictly increasing with range $(0,+\infty)$ if $\epsilon=0$ and $(1,+\infty)$ if $\epsilon=-1$.

This discussion shows that the metric $g_{\epsilon,m}$ is well defined on the product $P_{\epsilon,m}=I_{\epsilon,m}\times N$, where  $I_{\epsilon,m}=\{r;r>r_{\epsilon,m}\}$. Moreover, it extends smoothly to the slice $r=r_{\epsilon,m}$, the so-called {\em horizon}, denoted $\Hc_{\epsilon,m}$. This terminology can be justified as follows. The metric $g_{\epsilon,m}$ is {\em static} in the sense that
\begin{equation}\label{static}
(\overline\Delta \rho_{\epsilon,m})g_{\epsilon,m}-\overline\nabla^2
\rho_{\epsilon,m}+\rho_{\epsilon.m}{\rm Ric}_{g_{\epsilon.m}}=0,
\end{equation}
where $\overline \nabla$ is the connection of $g_{\epsilon,m}$ and $\overline \Delta$ is the corresponding Laplacian.
This means that the Lorentzian metric
\[
\tilde g_{\epsilon,m}=-\rho_{m,\epsilon}^2d\tau^2+g_{\epsilon,m},
\]
defined on $Q_{\epsilon,m}=\mathbb R\times P_{\epsilon,m}$,
is a static solution to the vacuum  field equations with negative cosmological constant:
\[
{\rm Ric}_{\tilde g_{\epsilon,m}}=-n\tilde g_{\epsilon,m}.
\]
Moreover, the null hypersurface $r=r_{\epsilon,m}$
defines the event horizon surrounding the central singularity $r=0$. This justifies the horizon terminology and explains why  $g_{\epsilon,m}$ is termed a black hole metric.
We note in passing that $\overline g_\epsilon$ in (\ref{aswe}) is obtained from $\tilde g_{\epsilon,0}$ by Wick rotation: $\tau\mapsto i\tau$.

A computation shows  that
if $(\theta_1,\cdots,\theta_{n-1})$ are coordinates in $N_r$ then the sectional curvatures of $g_{\epsilon,m}$ are
\[
K(\partial_r,\partial_{{\theta}_i})=-1-(n-2)\frac{m}{r^n}
\]
and
\[
K(\partial_{{\theta}_i},\partial_{{\theta}_j})=-1+\frac{2m}{r^{n-2}}.
\]
This not only shows that $g_{\epsilon,m}$ satisfies the appropriate dominant energy condition, since
its scalar curvature is
\[
R_{g_{\epsilon,m}} =  -n(n-1),
\]
but also suggests that $g_{\epsilon,m}$ is ALH in the sense described above. In fact, a straightforward computation  gives
\[
\|g_{\epsilon,m}-g_{\epsilon}\|_{g_\epsilon}+ \|dg_{\epsilon,m}\|_{g_\epsilon}=O\left(mr^{-n}\right),
\]
as expected.
Using (\ref{massl}) we finally conclude that $\mathfrak m_{(P_{\epsilon,m},g_{\epsilon,m})}=m$, which shows that $m$ should be interpreted as the total mass of $g_{\epsilon,m}$.

One immediately finds that
the area  $|\Hc_{\epsilon,m}|$ of the horizon $\Hc_{\epsilon,m}$ of $(P_{\epsilon,m},g_{\epsilon,m})$ relates to its mass  $m$ by means of
\[
m=\frac{1}{2}\left(\left(\frac{|\Hc_{\epsilon,m}|}{\vartheta_{n-1}}\right)^{\frac{n}{n-1}}
+\epsilon \left(\frac{|\Hc_{\epsilon,m}|}{\vartheta_{n-1}}\right)^{\frac{n-2}{n-1}}\right).
\]
Thus, in analogy with the standard Penrose inequality, it is natural to conjecture  that if $(M,g)$ is an $n$-dimensional ALH manifold (with respect to the reference metric $g_\epsilon$) carrying an outermost minimal  horizon $\Sigma$ and satisfying $R_g\geq -n(n-1)$ everywhere then there holds
\begin{equation}\label{propcon}
\mathfrak m_{(M,g)}\geq\frac{1}{2}\left(\left(\frac{|\Sigma|}{\vartheta_{n-1}}\right)^{\frac{n}{n-1}}
+\epsilon \left(\frac{|\Sigma|}{\vartheta_{n-1}}\right)^{\frac{n-2}{n-1}}\right),
\end{equation}
with the equality occurring if and only if $(M,g)$ is isometric to the corresponding black hole metric.

In the physical dimension $n=3$, (\ref{propcon}) first appears in \cite{CS} as a conjectured inequality whose veracity would follow in case the use of the so-called Geroch's monotonicity of the Hawking mass under the inverse mean curvature flow, as envisaged by Gibbons \cite{G}, could be justified.
Contrary to this initial expectation, Neves \cite{N} has shown that, at least in the AH case, the convergence properties of the flow at infinity are insufficient to implement Geroch's scheme. Similar remarks should also apply in the general ALH context, even though Lee and Neves \cite{LN1}  \cite{LN2} have recently established that Geroch's strategy works in the so-called \lq non-positive mass range\rq; see Remark \ref{negmass}.
Despite these negative results, we show here that the method developed in \cite{dLG3} \cite{dLG4} can be adapted to handle the special case of graphs in {\em any} dimension $n\geq 3$. To explain this we observe that each $(P_{\epsilon,m},g_{\epsilon,m})$ can be isometrically immersed as a radially symmetric vertical graph inside $(Q_{\epsilon},\overline g_\epsilon)$: the defining function $u_{\epsilon,m}:I_{\epsilon,m}\to \mathbb R$ satisfies $u_{\epsilon,m}(r_{\epsilon,m})=0$ and
\begin{equation}\label{embed}
\rho_{\epsilon}(r)^2\left(\frac{du_{\epsilon,m}}{dr^2}\right)^2
=\frac{1}{\rho_{\epsilon,m}(r)^2}-\frac{1}{\rho_{\epsilon}(r)^2}.
\end{equation}
It is clear from this that the horizon $\Hc_{\epsilon,m}$ lies on the totally geodesic horizontal slice $P_\epsilon^{0}$, with the intersection $M\cap P_\epsilon^{0}$ being orthogonal along $\Hc_{\epsilon,m}$. This motivates us to consider a more general class of hypersurfaces in $(Q_{\epsilon},\overline g_\epsilon)$.

\begin{definition}\label{alshyp}
We say that a complete hypersurface $(M,g)\subset (Q_{\epsilon},\overline g_\epsilon)$, possibly carrying a compact inner boundary $\Sigma$, is {\em asymptotically locally hyperbolic} if there exists a compact set $K\subset M$ so that $M-K$ can be written as a graph over the end $E_0$ of the horizontal slice
 $P_{\epsilon}^{0}\subset Q_{\epsilon}$, with the graph being  associated to a smooth function $u$ such
that the asymptotic condition (\ref{alhcond}) holds for the nonparametric chart $\Psi_u(x,u(x))=x$, $x\in E_0$.
Moreover, we assume that $R_{{\Psi_u}_*g}+n(n-1)$ is integrable.
\end{definition}

Under these conditions, the mass $\mathfrak m_{(M,g)}$ can be computed by taking $\Psi=\Psi_u$ in (\ref{massl}). More precisely, if we assume that the inner boundary $\Sigma$ lies on some totally geodesic, horizontal slice $P_{\epsilon}^{\tau_0}$, which we of course identify with $P_\epsilon$, and moreover that the intersection  $M\cap P^{\tau_0}_\epsilon$ is orthogonal along $\Sigma$, so that $\Sigma\subset M$ is minimal and hence a horizon indeed, then the computations in \cite{dLG2} actually give the following integral formula for the mass:
\begin{equation}\label{massform}
\mathfrak m_{(M,g)}=c_n\int_M\Theta\left(R_g+n(n-1)\right) dM+
c_n\int_\Sigma \rho_{\epsilon} Hd\Sigma,
\end{equation}
where $\Theta=\langle \partial/\partial t,N\rangle$, $N$ is the unit normal to $M$, which we choose so as to point upward at infinity, and $H$ is the mean curvature of $\Sigma\subset P_{\epsilon}$ with respect to its {\em inward} pointing unit normal, which means that the normal points in the direction {\em  opposite} to the end of $P_{\epsilon}$ given by $r=+\infty$. In particular, if $R_g\geq -n(n-1)$ and $M$ is a graph ($\Theta>0$) then
\begin{equation}\label{massineq}
\mathfrak m_{(M,g)}\geq c_n\int_\Sigma \rho_{\epsilon} Hd\Sigma.
\end{equation}

We are now in a position to use the following Alexandrov-Fenchel-type inequality.

\begin{theorem}\label{aftype}
Let $\Sigma\subset P_\epsilon$ be a compact embedded hypersurface which is {\em star-shaped} in the sense that it can be written as a radial graph over a slice $N_r$ and {\em strictly mean convex} in the sense that its mean curvature satisfies $H>0$. Then there holds
\begin{equation}\label{aftype2}
c_n\int_\Sigma \rho_{\epsilon} Hd\Sigma\geq \frac{1}{2}\left(
\left(\frac{|\Sigma|}{\vartheta_{n-1}}\right)^{\frac{n}{n-1}}
+\epsilon \left(\frac{|\Sigma|}{\vartheta_{n-1}}\right)^{\frac{n-2}{n-1}}\right),
\end{equation}
with the equality occurring if and only if $\Sigma$ is a slice.
\end{theorem}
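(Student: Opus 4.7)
The plan is to deform $\Sigma$ by outward inverse mean curvature flow (IMCF) $\partial_t X = \nu/H$ and exhibit a monotone functional along the flow whose vanishing at infinity yields (\ref{aftype2}). Under the hypotheses (star-shaped, strictly mean convex) in the radially symmetric warped product $(P_\epsilon, g_\epsilon)$, standard adaptations of the Gerhardt-Urbas theory guarantee long-time existence of a smooth flow $\{\Sigma_t\}_{t\geq 0}$ preserving both properties, with $|\Sigma_t| = |\Sigma_0| e^t$. Setting $\alpha = (n-1)\vartheta_{n-1}^{-1/(n-1)}$ and $\beta = (n-1)\vartheta_{n-1}^{1/(n-1)}$, define
\begin{equation*}
F(t) = |\Sigma_t|^{-n/(n-1)} \int_{\Sigma_t} \rho_\epsilon H \, d\Sigma_t - \alpha - \beta \epsilon \, |\Sigma_t|^{-2/(n-1)}.
\end{equation*}
A direct calculation on slices $N_r$ (where $H = (n-1)\rho_\epsilon(r)/r$) gives $F \equiv 0$; rearranging, $F(0) \geq 0$ is exactly (\ref{aftype2}). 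The aim is thus to prove (i) $F'(t) \leq 0$ and (ii) $F(t) \to 0$ as $t \to \infty$.

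For step (i), differentiate $I(t) := \int_{\Sigma_t} \rho_\epsilon H \, d\Sigma_t$ using the IMCF evolution $\partial_t H = -\Delta_{\Sigma_t}(1/H) - (|A|^2 + {\rm Ric}(\nu,\nu))/H$ and integrate by parts to move the Laplacian off $1/H$. The essential simplifications come from the static identities $\overline{\nabla}^2 \rho_\epsilon = \rho_\epsilon g_\epsilon$ and $\overline{\Delta} \rho_\epsilon = n\rho_\epsilon$ (immediate from (\ref{static}) with $m=0$ and ${\rm sec}(g_\epsilon) \equiv -1$), combined with $\Delta_{\Sigma_t} \rho_\epsilon = \overline{\Delta} \rho_\epsilon - \overline{\nabla}^2 \rho_\epsilon(\nu,\nu) - H \langle \nabla \rho_\epsilon, \nu\rangle$ and ${\rm Ric}_{g_\epsilon}(\nu,\nu) = -(n-1)$. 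After cancellations one obtains
\begin{equation*}
I'(t) = 2 \int_{\Sigma_t} \langle \nabla \rho_\epsilon, \nu\rangle \, d\Sigma_t + I(t) - \int_{\Sigma_t} \rho_\epsilon \frac{|A|^2}{H} \, d\Sigma_t.
\end{equation*}
Applying the pointwise bound $|A|^2 \geq H^2/(n-1)$, a short algebraic manipulation shows that $F'(t) \leq 0$ is equivalent to the integral inequality
\begin{equation*}
\int_{\Sigma_t}\bigl[\rho_\epsilon H - (n-1) \langle \nabla \rho_\epsilon, \nu\rangle\bigr] \, d\Sigma_t \geq (n-1) \epsilon \, \vartheta_{n-1}^{1/(n-1)} \, |\Sigma_t|^{(n-2)/(n-1)},
\end{equation*}
which is precisely the integral inequality of Brendle \cite{Br} for star-shaped mean convex hypersurfaces in warped products of this type. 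Since both properties are preserved by the flow, (i) follows.

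For (ii), the asymptotic behavior of IMCF from star-shaped data in radially symmetric warped products is by now standard: after the natural rescaling, $\Sigma_t$ approaches a slice, extending to the LH setting classical results of Gerhardt analogous to the convergence phenomena exploited in \cite{dLG4} and \cite{LN1, LN2}. Since $F \equiv 0$ on slices and the integrands entering $F$ are continuous under this convergence, $F(t) \to 0$. Combining (i) and (ii) yields $F(0) \geq 0$, which is (\ref{aftype2}). The equality case is a rigidity statement: $F(0) = 0$, together with the monotonicity and $F(\infty) = 0$, forces $F \equiv 0$ along the entire flow, hence equality in Cauchy-Schwarz (so every $\Sigma_t$ is umbilical) and in Brendle's inequality (whose rigidity gives slices), so $\Sigma$ itself is a slice. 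The principal technical obstacle is the rigorous verification of (ii), which reduces to uniform a priori estimates for the quasilinear parabolic equation governing the radial graph function defining $\Sigma_t$ over $N$; the specific structure of $(P_\epsilon, g_\epsilon)$ is what renders these estimates tractable.
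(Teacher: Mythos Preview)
Your overall strategy---IMCF, a monotone functional, and asymptotic convergence to slices---is exactly the paper's approach, and your computation of $I'(t)$ and the reduction via $|A|^2\geq H^2/(n-1)$ are correct. Your functional $F$ is a rescaling of the paper's quantity $\Lc(\Sigma)=\bigl(\int_\Sigma\rho_\epsilon H\,d\Sigma-(n-1)\vartheta_{n-1}\Ac^{n/(n-1)}\bigr)/\Ac^{(n-2)/(n-1)}$; one checks that $F$ and $\Lc-(n-1)\vartheta_{n-1}\epsilon$ differ by a positive time-dependent factor, so $F(0)\geq 0$ and $\Lc(\Sigma)\geq(n-1)\vartheta_{n-1}\epsilon$ are the same statement.

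The genuine gap is in your step (i). After the Cauchy--Schwarz reduction, you correctly arrive at the needed inequality
\[
\int_{\Sigma_t}\rho_\epsilon H\,d\Sigma_t-(n-1)\int_{\Sigma_t}p\,d\Sigma_t\;\geq\;(n-1)\epsilon\,\vartheta_{n-1}^{1/(n-1)}|\Sigma_t|^{(n-2)/(n-1)},
\]
but this is \emph{not} the content of Brendle~\cite{Br}. What Brendle proves for these warped products is the Heintze--Karcher-type bound $(n-1)\int_\Sigma\rho_\epsilon/H\,d\Sigma\geq\int_\Sigma p\,d\Sigma$ (the paper's Proposition~\ref{bren}), which involves $\rho_\epsilon/H$, not $\rho_\epsilon H$. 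The inequality you invoke is a Minkowski-type inequality (of the flavor of \cite{BHW}) and does not follow directly from Heintze--Karcher; indeed, for $\epsilon=-1$ it is a nontrivial sharp bound with equality on slices.

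The paper closes precisely this gap with a two-stage argument you are missing: Brendle's Heintze--Karcher is used not to bound $\int\rho_\epsilon H-(n-1)\int p$ directly, but to show that the auxiliary quantity $(\Jc-\Kc)/\Ac^{n/(n-1)}$, with $\Jc=\int_\Sigma p\,d\Sigma$ and $\Kc=\vartheta_{n-1}\Ac^{n/(n-1)}$, is nondecreasing along the flow. Combined with the asymptotic $\Jc/\Ac^{n/(n-1)}\to\vartheta_{n-1}$, this yields the intermediate isoperimetric-type inequality $\Jc(\Sigma_t)\leq\Kc(\Sigma_t)$ for all $t$. It is \emph{this} inequality, not anything taken straight from \cite{Br}, that is fed back into the evolution of $\mathcal I$ to obtain the main monotonicity $\Lc'\leq 0$ (equivalently, to furnish what you need for $F'\leq 0$). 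Your write-up collapses these two monotone quantities into one and attributes the missing step to Brendle; that attribution is incorrect, and without the auxiliary $\Jc\leq\Kc$ argument the monotonicity of $F$ is not established.
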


By combining (\ref{massineq}) and (\ref{aftype2}) we immediately obtain the first part of our main result.

\begin{theorem}\label{theomain}
If $M\subset Q_{0,\epsilon}$ is an ALH graph as above, so that its horizon $\Sigma\subset P^{\tau_0}_\epsilon$ is
star-shaped and  mean convex in the sense that $H\geq 0$, then
\begin{equation}\label{penrose}
\mathfrak m_{(M,g)}\geq \frac{1}{2}\left(
\left(\frac{|\Sigma|}{\vartheta_{n-1}}\right)^{\frac{n}{n-1}}
+\epsilon\left(\frac{|\Sigma|}{\vartheta_{n-1}}\right)^{\frac{n-2}{n-1}}\right),
\end{equation}
with the equality holding if and only if $(M,g)$ is (congruent to) the graph realization of the corresponding black hole solution.
\end{theorem}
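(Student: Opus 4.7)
The plan is to chain (\ref{massineq}) with Theorem \ref{aftype} and then to analyze carefully the equality case. Since $M\subset Q_\epsilon$ is a graph, the upward normal satisfies $\Theta=\langle \partial/\partial \tau,N\rangle>0$ everywhere, so the hypothesis $R_g\geq -n(n-1)$ built into Definition \ref{alshyp} makes the bulk integrand $\Theta(R_g+n(n-1))$ in (\ref{massform}) pointwise non-negative. Dropping it yields (\ref{massineq}), and Theorem \ref{aftype} applied to the star-shaped horizon $\Sigma\subset P_\epsilon^{\tau_0}$ controls the remaining boundary integral by exactly the Penrose expression. One small bookkeeping point is that Theorem \ref{aftype} is stated under strict mean convexity $H>0$, while the present hypothesis only gives $H\geq 0$; I would bridge this gap by a standard approximation, perturbing $\Sigma$ along its outward normal inside $P_\epsilon^{\tau_0}$ to a nearby star-shaped hypersurface with strictly positive mean curvature and passing to the limit in the continuous inequality (\ref{aftype2}).

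For the rigidity half, assume equality holds in (\ref{penrose}). Then both (\ref{massineq}) and (\ref{aftype2}) must be equalities simultaneously. The first, together with $\Theta>0$, forces $R_g+n(n-1)\equiv 0$ pointwise on $M$. The equality clause of Theorem \ref{aftype} forces $\Sigma$ to be a coordinate slice $N_{r_0}\subset P_\epsilon^{\tau_0}$; substituting $|\Sigma|=r_0^{n-1}\vartheta_{n-1}$ into the right-hand side of (\ref{penrose}) and comparing with the Kottler mass-horizon relation derived in the introduction identifies $r_0=r_{\epsilon,m}$, where $m=\mathfrak m_{(M,g)}$. Thus $\Sigma$ coincides geometrically with the horizon $\Hc_{\epsilon,m}$ of the model solution.

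The remaining and main step is to promote this boundary information to a global isometry with the Kottler graph. My plan is to reinterpret the scalar-curvature constraint $R_g=-n(n-1)$ as a quasilinear elliptic PDE for the graph function $u$ of $M-K$ over the end of $P_\epsilon^{0}$ outside $N_{r_0}$, with the orthogonality of $M$ and $P_\epsilon^{\tau_0}$ along $\Sigma$ providing a vertical-tangent boundary condition at $r=r_0$, the ALH asymptotics of Definition \ref{alshyp} controlling $u$ at infinity, and the radially symmetric Kottler profile $u_{\epsilon,m}$ from (\ref{embed}) furnishing an explicit solution. The cleanest route to uniqueness is to show that every level set $\{u=c\}$ is itself a coordinate slice, by exploiting the equality case of Brendle's integral inequality used inside the proof of Theorem \ref{aftype} along the full inverse mean curvature flow starting at $\Sigma$ in $P_\epsilon$; once radial symmetry is established, $u$ satisfies an ODE in $r$ pinned down by $u(r_0)=\tau_0$, so that $u=u_{\epsilon,m}$.

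The main obstacle is precisely this outward propagation of equality from the single boundary slice. The ALH asymptotic condition (\ref{alhcond}) is far too weak to impose radial symmetry on its own, and upgrading pointwise $R_g=-n(n-1)$ together with a rigid horizon to full rotational symmetry of the graph requires a careful tracking of the equality case in the integral monotonicity formula along the entire inverse mean curvature evolution in $P_\epsilon$. This, together with the $H\geq 0$ approximation at the start, is where the technical work will concentrate.
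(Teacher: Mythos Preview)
Your derivation of the inequality matches the paper exactly: combine (\ref{massineq}) with Theorem \ref{aftype}, and pass from $H\geq 0$ to $H>0$ by approximation. Your equality analysis also begins correctly: $\Theta>0$ forces $R_g\equiv -n(n-1)$, and the rigidity clause of Theorem \ref{aftype} makes $\Sigma$ a slice $N_{r_0}$ with $r_0=r_{\epsilon,m}$.

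The gap is in your proposed mechanism for propagating rigidity from the horizon to the whole graph. Running the inverse mean curvature flow in $P_\epsilon$ starting from the slice $\Sigma=N_{r_0}$ simply produces slices; Brendle's inequality is then an identity at every time, but this tells you nothing about $M$, because the flow lives entirely in $P_\epsilon^{\tau_0}$ and has no contact with the graph function $u$. There is no a priori relation between the IMCF leaves in $P_\epsilon$ and the level sets $\{u=c\}$ of the graph, so the ``outward propagation of equality'' you hope for has no carrier. You correctly flag this as the main obstacle, but the mechanism you propose does not address it.

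The paper's route is different and avoids this issue. It observes, following \cite{HW} and as explained in \cite{dLG4}, that once $R_g=-n(n-1)$ holds identically, the graph $M$ is an \emph{elliptic} solution of (\ref{ellip}): the scalar curvature of a graph in $(Q_\epsilon,\overline g_\epsilon)$ is a quasilinear second order operator in the height function whose principal symbol is definite. One can then run an Alexandrov moving-planes argument in the vertical direction of $Q_\epsilon$, comparing $M$ to its reflections across horizontal slices. Since the Kottler graph (\ref{embed}) is an elliptic solution with the same boundary and asymptotic data, the reflection argument forces $M$ to be congruent to it. So the rigidity comes from PDE ellipticity plus a symmetry/comparison argument on the ambient space $Q_\epsilon$, not from the IMCF monotonicity in $P_\epsilon$.
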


\begin{remark}\label{effec}
{\rm Under the conditions of Theorem \ref{theomain}, the mass $\mathfrak m_{(M,g)}$ is always positive due to (\ref{massineq}). In particular, if $\epsilon=-1$ the lower bounds (\ref{aftype2}) and (\ref{penrose}) only are  effective if we further assume that  $|\Sigma|>\vartheta_{n-1}$.}
\end{remark}

As already observed, the following corollary provides a positive answer to   a question posed by Gibbons \cite{G} and Chru\'sciel-Simon \cite{CS} for the class of initial data sets we consider.

\begin{corollary}\label{cormain}
If the horizon $\Sigma$ is a surface of genus $\gamma\geq 1$
then
\begin{equation}\label{haw}
\mathfrak m_{(M,g)}\geq \left(\frac{4\pi}{\vartheta_2}\right)^{{3}/{2}}
\sqrt{\frac{|\Sigma|}{16\pi}}\left(1-\gamma+\frac{|\Sigma|}{4\pi}\right),
\end{equation}
where for $\gamma=1$ we assume the normalization $\vartheta_2=4\pi$.
Moreover, the equality holds if and only if $(M,g)$ is (congruent to) the graph realization of the corresponding black hole solution.
\end{corollary}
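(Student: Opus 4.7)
The plan is to derive Corollary \ref{cormain} as a direct specialization of Theorem \ref{theomain} in the physical dimension $n=3$. The key observation is that, for a closed surface $(N^2,h)$ of constant sectional curvature $\epsilon\in\{-1,0\}$, the Gauss-Bonnet theorem ties the area $\vartheta_2$ to the topology via
\[
\epsilon\,\vartheta_2 \;=\; \int_N K\,dA \;=\; 2\pi\chi(N) \;=\; 4\pi(1-\gamma).
\]

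First I would split into the only two topologically admissible cases compatible with the LH reference geometry. If $\gamma=1$, then $N$ must be a flat torus ($\epsilon=0$), and the stated normalization fixes $\vartheta_2=4\pi$. If $\gamma\geq 2$, then $N$ is a closed hyperbolic surface ($\epsilon=-1$) and the Gauss-Bonnet identity above gives $\vartheta_2=4\pi(\gamma-1)$. In either case I would substitute the corresponding pair $(\epsilon,\vartheta_2)$ into the right-hand side of (\ref{penrose}) with $n=3$ and simplify. For $\gamma=1$ the $\epsilon$-term vanishes and (\ref{penrose}) reduces to $\mathfrak m_{(M,g)}\geq\tfrac12(|\Sigma|/4\pi)^{3/2}$, which is precisely (\ref{haw}) when $1-\gamma=0$. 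For $\gamma\geq 2$, factoring $(|\Sigma|/\vartheta_2)^{1/2}$ out of (\ref{penrose}) yields
\[
\mathfrak m_{(M,g)} \;\geq\; \tfrac12\left(\frac{|\Sigma|}{\vartheta_2}\right)^{1/2}\left(\frac{|\Sigma|}{\vartheta_2}-1\right),
\]
and replacing $\vartheta_2=4\pi(\gamma-1)$ matches this expression term by term with the right-hand side of (\ref{haw}).

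The rigidity clause is inherited verbatim from the equality case of Theorem \ref{theomain}. The only care needed is the arithmetic verification that the two presentations of the lower bound agree after the Gauss-Bonnet substitution, together with the implicit understanding that the star-shaped and mean convex hypotheses of Theorem \ref{theomain} on the horizon $\Sigma$ remain in force. There is thus no genuine obstacle: the corollary is essentially an algebraic repackaging of the main theorem in dimension three, the point being to highlight the way the topological type of the horizon enters the Penrose bound through Gauss-Bonnet, thereby answering the Chru\'sciel-Simon-Gibbons question in this graphical setting.
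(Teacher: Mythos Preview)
Your proposal is correct and follows essentially the same approach as the paper: specialize Theorem \ref{theomain} to $n=3$, split into the cases $\gamma\geq 2$ ($\epsilon=-1$, Gauss--Bonnet giving $\vartheta_2=4\pi(\gamma-1)$) and $\gamma=1$ ($\epsilon=0$, with the normalization $\vartheta_2=4\pi$), and check the algebra. The paper's proof is simply a more compressed version of exactly this argument, and the rigidity clause is indeed inherited directly from the theorem.
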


\begin{proof}
If $\gamma\geq 2$ this follows by taking $n=3$ and $\epsilon=-1$ in the theorem and  observing that Gauss-Bonnet gives $\vartheta_2=4\pi (\gamma-1)$. If $\gamma=1$ we take $\epsilon=0$ and use the normalization.
\end{proof}

\begin{remark}\label{negmass}
{\em As discussed in \cite{CS}, when $\epsilon=-1$ the Kottler metrics (\ref{bhmet}) also describe static black hole solutions when the parameter $m$ becomes negative up to a certain critical value, namely,
\[
m_{\rm crit}=-\frac{(n-2)^{\frac{n-2}{2}}}{n^{\frac{n}{2}}}.
\]
In this regard we mention that Lee and Neves \cite{LN1} \cite{LN2} used the Huisken-Ilmanen's formulation of the inverse mean curvature flow to establish a Penrose-type inequality for conformally compact ALH $3$-manifolds in this mass range. More precisely, they prove (\ref{haw}) with the mass replaced by the supremum of the mass aspect function, which is assumed to be non-positive along the boundary at infinity. In particular, their manifolds always have {\em non-positive} mass while our graphs necessarily satisfy $\mathfrak m_{(M,g)}>0$; see Remark \ref{effec}. Thus, their result and Corollary \ref{cormain} are in a sense complementary to each other.}
\end{remark}

The proofs of Theorems \ref{aftype} and \ref{theomain} are presented in Section \ref{proof}. This uses the asymptotics of solutions of the mean curvature flow established in Section \ref{imcflh}.

\vspace{0.2cm}

\noindent
{\bf Acknowledgements:} The authors would like to thank Prof. A. Neves for making available the preprint \cite{LN1}.

\section{The inverse mean curvature flow in LH manifolds}
\label{imcflh}

In this section we study the asymptotics of the inverse mean curvature for a class of hypersurfaces in
$P_\epsilon$. Our main result, Theorem \ref{longtime}, follows from standard arguments in the theory of such flows; see for instance \cite{Ge1}, \cite{Ge2} , \cite{BHW} and \cite{D}. For convenience we include all the relevant estimates here, even though our treatment is necessarily brief. The interested reader will find a more detailed account in the references above.

\subsection{Hypersurfaces in LH manifolds}\label{hyperlh}
It is convenient to consider a parameter $s$ so that
\[
ds=\frac{dr}{\rho_{\epsilon}(r)},
\]
which gives
\[
s=\left\{
\begin{array}{cc}
\log r, & \epsilon =0\\
\log (2\sqrt{r^2-1}+2r), & \epsilon=-1
\end{array}
\right.
\]
Thus, $s\in\mathbb R$ if $\epsilon=0$ and $s>\log 2$ if $\epsilon=-1$.
Using this coordinate, it follows from (\ref{half}) that
\begin{equation}\label{change}
g_{\epsilon}=ds^2+\lambda_\epsilon(s)^2h,
\end{equation}
where
\[
\lambda_\epsilon(s)=\left\{
\begin{array}{cc}
e^s, & \epsilon =0\\
\frac{e^{s}}{4}+e^{-s}, & \epsilon=-1
\end{array}
\right.
\]
We note the useful identities
\begin{equation}\label{usef}
\dot\lambda^2_\epsilon=\lambda^2_\epsilon+\epsilon, \quad \ddot\lambda_\epsilon=\lambda_\epsilon.
\end{equation}
Also, in terms of $s$ we have
\[
\rho_\epsilon(s)=\left\{
\begin{array}{cc}
e^s, & \epsilon =0\\
\sqrt{\left(\frac{e^s}{4}+e^{-s}\right)^2-1}, & \epsilon=-1
\end{array}
\right.
\]
This shows that
\begin{equation}\label{fund}
\dot\lambda_\epsilon=\rho_\epsilon.
\end{equation}

We consider compact hypersurfaces $\Sigma\subset P_{\epsilon}$ which are {\em star-shaped} in the sense that they can be written as  radial graphs over some slice $N_s=\{s\}\times N$. We choose the {\em inward} unit normal $\xi$ to $\Sigma$, which means that $\xi$ points toward the direction {\em opposite} to the end of $P_\epsilon$ defined by $s=+\infty$. Thus, if $\Sigma$ is graphically represented as
\[
\theta\in N\mapsto (u(\theta),\theta)\in P_\epsilon,
\]
then
\[
\xi=\frac{1}{W}\left(\frac{u^i}{\lambda_\epsilon(u)^2}\partial_{\theta_i}
-\partial_s\right),\quad W=\sqrt{1+|D v|_h^2},
\]
where $u_i=\partial_{\theta_i}u$, $u^i=h^{ij}u_j$, $D$ is the covariant derivative of $h$ and
\begin{equation}\label{change2}
v=\varphi(u), \quad \dot \varphi=1/\lambda_\epsilon.
\end{equation}
We note that the tangent space to the graph is spanned by
\[
Z_i=\lambda_\epsilon v_i\partial_s+\partial_{\theta_i},\quad i=1,\cdots,n-1,
\]
so that the induced metrics are
\begin{equation}\label{met1}
\eta_{ij}=\lambda_\epsilon^2(h_{ij}+v_iv_j)
\end{equation}
and
\[
\eta^{ij}=\frac{1}{\lambda_{\epsilon}^{2}}\left(h^{ij}-\frac{v^iv^j}{W^2}\right).
\]
The second fundamental form of the graph is
\[
c_{ij}=\frac{\lambda_\epsilon}{W}\left(\dot\lambda_\epsilon(h_{ij}+v_iv_j)-
v_{ij}\right),
\]
so  the shape operator is
\[
a^i_j=\eta^{ik}c_{kj}=\frac{\dot\lambda_\epsilon}{W\lambda_\epsilon}\delta^i_j-
\frac{1}{W\lambda_\epsilon}\tilde h^{ik}v_{kj},
\]
where
\[
\tilde h^{ij}=h^{ij}-\frac{v^iv^j}{W^2}.
\]
Thus, the mean curvature is
\begin{equation}\label{meancur}
H=a^i_i=(n-1)\frac{\dot\lambda_\epsilon}{W\lambda_\epsilon}-\frac{\tilde h^{ik}}{W\lambda_\epsilon}v_{ki}.
\end{equation}
We recall that
\begin{equation}\label{cauchy}
|a|^2\geq \frac{H^2}{n-1},
\end{equation}
with the equality holding if and only if $\Sigma$ is totally umbilical.

We also consider the {\em support function}
\begin{equation}\label{supp}
p=-\langle \overline \nabla\rho_\epsilon,\xi\rangle=\frac{\lambda_\epsilon}{W},
\end{equation}
which
relates to $\rho_\epsilon$ and $H$ by means of the following Minkowski-type identity:
\begin{equation}\label{minko}
\Delta \rho_\epsilon=(n-1)\rho_\epsilon -Hp,
\end{equation}
where $\Delta={\rm div}\circ \nabla$ is the Laplacian of $\Sigma$. This is an easy consequence of the fact that, for $m=0$, (\ref{static}) reduces to
\begin{equation}\label{confo}
\overline\nabla^2\rho_\epsilon=\rho_\epsilon g_\epsilon.
\end{equation}

\subsection{The inverse mean curvature flow}\label{subimcf}

We consider  a one-parameter family
\[
t\in [0,\delta)\mapsto \Sigma_t\subset P_\epsilon
\]
of star-shaped, strictly mean convex  hypersurfaces evolving under the inverse mean curvature flow.
This means that the corresponding parametrization $X:[0,\delta)\times N\to P_\epsilon$ satisfies
\begin{equation}\label{imcfpar}
\frac{\partial X}{\partial t}=-\frac{\xi}{H}.
\end{equation}
The following evolution equations are well-known \cite{D} \cite{dLG4}.

\begin{proposition}\label{evolequations}
Under the flow (\ref{imcfpar}) we have:
\begin{enumerate}
\item
The area element $d\Sigma$ evolves as
 \begin{equation}\label{evolvelemvol}
\frac{\partial}{\partial t}d\Sigma=d\Sigma.
\end{equation}
In particular, if $A$ is the area of $\Sigma$ then
\begin{equation}\label{evolvarea}
\frac{dA}{dt}=A;
\end{equation}
\item The shape operator evolves as
\begin{equation}\label{evolvshape}
\frac{\partial a}{\partial t}=\frac{\nabla^2H}{H^2}-2\frac{\nabla H\cdot\nabla H}{H^3}-\frac{a^2}{H}+\frac{I}{H},
\end{equation}
where $\cdot$ is the symmetric product and $I$ is the identity map. As a consequence,
the mean curvature evolves as
\begin{equation}\label{evolvmean}
\frac{\partial H}{\partial t}=\frac{\Delta H}{H^2}-2
\frac{|\nabla H|^2}{H^3} -\frac{|a|^2}{H}+\frac{n-1}{H},
\end{equation}
and the tensor $b=Ha$ evolves as
\begin{equation}\label{simmons}
\frac{\partial b}{\partial t} \approx \frac{\Delta b}{H^2}
 -2\frac{b^2}{H^2}+2(n-1)\frac{b}{H^2},
\end{equation}
where $\approx$ means that first order terms in $b$ have been omitted in the right-hand side.
 \item The static potential evolves as
 \begin{equation}\label{evolvstat}
 \frac{\partial \rho_\epsilon}{\partial t}=\frac{p}{H}.
 \end{equation}
 \item The support function evolves as
 \begin{equation}\label{evolvsupp}
 \frac{\partial p}{\partial t}=\frac{\rho_\epsilon}{H}+\frac{1}{H^2}
 \langle \nabla \rho_\epsilon,\nabla H\rangle.
 \end{equation}
 As a consequence of (\ref{minko}),
 \begin{equation}\label{evolvsuppint}
 \frac{d}{dt}\int_\Sigma p=n\int_\Sigma\frac{\rho_\epsilon}{H}d\Sigma.
 \end{equation}
\end{enumerate}
\end{proposition}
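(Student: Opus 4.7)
The plan is to derive each of the five identities by specializing the general first-variation formulas for a hypersurface moving with prescribed normal speed to the IMCF velocity $-\xi/H$. Two structural features of the ambient will drive the computation: $(P_\epsilon,g_\epsilon)$ has constant sectional curvature $-1$, and the static potential satisfies (\ref{confo}), namely $\overline\nabla^2\rho_\epsilon=\rho_\epsilon g_\epsilon$. Sign conventions must be tracked carefully, since $\xi$ is chosen \emph{inward} and $c$ is normalized so that $c(X,Y)=-\langle\overline\nabla_X\xi,Y\rangle$ yields $H>0$ on star-shaped surfaces.

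I would first prove (\ref{evolvelemvol}) by differentiating $\eta_{ij}=\langle X_i,X_j\rangle$ along the flow, decomposing $\overline\nabla_{X_i}(-\xi/H)$ into its tangent and normal parts to obtain $\partial_t\eta_{ij}=(2/H)c_{ij}$, and taking half the $\eta$-trace to get $\partial_t\ln\sqrt{\det\eta}=1$. Integrating over the closed hypersurface then yields (\ref{evolvarea}). For the shape operator I would invoke the general evolution formula
\[
\frac{\partial a^i_j}{\partial t}=-\nabla^i\nabla_j F-F(a^2)^i_j+F\,\overline R^{\,i}{}_{\xi j\xi}
\]
with $F=-1/H$, and use that constant sectional curvature $-1$ reduces $\overline R^{\,i}{}_{\xi j\xi}$ to $\pm\delta^i_j$; fixing the sign so that the curvature contribution becomes $I/H$, and expanding $\nabla(1/H)$ and $\nabla^2(1/H)$ in terms of $\nabla H$ and $\nabla^2H$, then gives (\ref{evolvshape}). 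Tracing yields (\ref{evolvmean}). Equation (\ref{simmons}) is obtained from $\partial_tb=(\partial_tH)a+H\partial_ta$ by regrouping: the heat-type term $\Delta b/H^2$ emerges after commuting derivatives in $H\cdot\nabla^2H/H^2$, the quadratic term $-2b^2/H^2$ from combining $H\cdot(-a^2/H)$ with the $-|a|^2/H\cdot a$ contribution coming from $\partial_tH$, and the term $2(n-1)b/H^2$ from the curvature pieces, with all remainders that are strictly first order in $b$ absorbed into the symbol $\approx$.

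The remaining three identities are then quick. The chain rule together with (\ref{supp}) gives $\partial_t\rho_\epsilon=\langle\overline\nabla\rho_\epsilon,-\xi/H\rangle=p/H$, which is (\ref{evolvstat}). For (\ref{evolvsupp}), differentiating $p=-\langle\overline\nabla\rho_\epsilon,\xi\rangle$ along the flow produces two contributions: from $\overline\nabla\rho_\epsilon$, via the static relation (\ref{confo}), the term $\rho_\epsilon/H$; and from $\xi$, via the standard identity $\partial_t\xi=\nabla(1/H)$, the term $H^{-2}\langle\nabla\rho_\epsilon,\nabla H\rangle$. Finally, (\ref{evolvsuppint}) follows from $(d/dt)\int_\Sigma p\,d\Sigma=\int_\Sigma\partial_tp\,d\Sigma+\int_\Sigma p\,d\Sigma$ via (\ref{evolvelemvol}) and (\ref{evolvsupp}); writing $H^{-2}\langle\nabla\rho_\epsilon,\nabla H\rangle=-\langle\nabla\rho_\epsilon,\nabla(1/H)\rangle$, integrating by parts on the closed $\Sigma$, and invoking the Minkowski identity (\ref{minko}) collapses the total to $n\int_\Sigma\rho_\epsilon/H\,d\Sigma$.

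The main obstacle is sign bookkeeping: the inward orientation of $\xi$ flips the expected signs in several of the variation formulas, and a careful Gauss--Codazzi analysis is required to confirm that the ambient curvature contribution to $\partial_t a$ enters as $+I/H$ rather than $-I/H$. Once these conventions are pinned down, the rest is a routine if somewhat lengthy verification, and the integral identity (\ref{evolvsuppint}) is then an integration-by-parts consequence of the pointwise formulas combined with the Minkowski identity.
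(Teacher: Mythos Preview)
Your proposal is correct and follows the standard derivation of these evolution equations. The paper itself does not prove this proposition: it simply introduces it with the remark that ``the following evolution equations are well-known'' and cites \cite{D} and \cite{dLG4}, so in fact you have supplied more detail than the paper does. Your handling of the sign conventions (inward $\xi$, $c_{ij}=-\langle\overline\nabla_{X_i}\xi,X_j\rangle$), the use of the conformal identity (\ref{confo}) for items (3) and (4), and the integration-by-parts reduction via (\ref{minko}) for (\ref{evolvsuppint}) are all on target.
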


Up to tangential diffeomorphisms, (\ref{imcfpar}) means that
\begin{equation}\label{par}
\frac{\partial u}{\partial t}=\frac{W}{H},
\end{equation}
where $u=u(t,\cdot)$ is the evolving graphing function. Equivalently,
\begin{equation}\label{imcfnonpar}
\frac{\partial v}{\partial t}=\frac{W}{\lambda_\epsilon H},
\end{equation}
where $u$ and $v$ are related by (\ref{change2}).
It follows from (\ref{meancur}) that  (\ref{imcfnonpar})   is a  parabolic equation for $v$, so that a smooth solution always exists for any strictly mean convex initial hypersurface, at least for small $\delta$. We now develop the required apriori estimates to make sure that any such solution is defined for all time $t>0$.
Our final result, Theorem \ref{longtime}, is far from providing the optimal asymptotic behavior of solutions but it suffices for our purposes. Our presentation follows \cite{BHW} and \cite{Ge1} closely; see also \cite{D}.
As it is usually the case, a  key ingredient here is the Parabolic Maximum Principle.

In the following, whenever $U$ is a time-dependent function
we use the notation $U\sim e^{\frac{t}{n-1}}$ to mean that $U$ is bounded from above and below by a positive constant times $e^{\frac{t}{n-1}}$.
Also,
we set
\[
\underline U(t)=\inf_N U(t,\cdot), \quad \overline U(t)=\sup_N U(t,\cdot).
\]
The next $C^0$ estimate is an immediate consequence of the  fact that, in view of (\ref{meancur}), the mean curvature is comparable to
$
(n-1){\dot\lambda_\epsilon}/{W\lambda_\epsilon}
$
at an extremal point of $v$.

\begin{proposition}\label{c0est}
As long as the solution $u$ of (\ref{par}) exists, one has
\begin{equation}\label{c0estpre}
\lambda_\epsilon(u)\sim e^{\frac{t}{n-1}}.
\end{equation}
More precisely,
\begin{equation}\label{c0est2}
e^{\frac{t}{n-1}}\lambda_\epsilon(\underline u(0))\leq \lambda_{\epsilon}(\underline u(t))\leq \lambda_{\epsilon}(\overline u(t)) \leq e^{\frac{t}{n-1}}\lambda_\epsilon(\overline u(0)).
\end{equation}
\end{proposition}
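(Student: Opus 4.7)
The plan is to apply the parabolic maximum principle, in the form of Hamilton's trick, directly to the scalar evolution equation (\ref{par}), exploiting a drastic simplification of $H$ at spatial extrema. First I would note that, since $v=\varphi(u)$ with $\dot\varphi=1/\lambda_\epsilon>0$, the spatial maxima and minima of $u(t,\cdot)$ and $v(t,\cdot)$ coincide, and that at any such extremum $\theta_0\in N$ one has $v_i(\theta_0)=0$. Consequently $W(\theta_0)=1$ and $\tilde h^{ij}(\theta_0)=h^{ij}(\theta_0)$, so the mean curvature formula (\ref{meancur}) collapses to
\[
H(\theta_0)=(n-1)\frac{\dot\lambda_\epsilon(u)}{\lambda_\epsilon(u)}\bigg|_{\theta_0}-\frac{h^{ik}v_{ki}}{\lambda_\epsilon(u)}\bigg|_{\theta_0}.
\]

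At a spatial maximum the Hessian $(v_{ij})(\theta_0)$ is negative semi-definite, so the second term is non-negative and hence $H(\theta_0)\geq (n-1)\dot\lambda_\epsilon(\overline u)/\lambda_\epsilon(\overline u)$. Plugging this bound into (\ref{par}) and invoking Hamilton's trick (valid because the evolution is smooth and $N$ is compact), I would obtain
\[
\frac{d\overline u}{dt}\leq \frac{1}{n-1}\frac{\lambda_\epsilon(\overline u)}{\dot\lambda_\epsilon(\overline u)}
\]
in the a.e.\ classical sense. Multiplying by $\dot\lambda_\epsilon(\overline u)$ rewrites this as $\frac{d}{dt}\lambda_\epsilon(\overline u)\leq \lambda_\epsilon(\overline u)/(n-1)$, and integration yields the rightmost inequality in (\ref{c0est2}). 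At a spatial minimum the Hessian $(v_{ij})$ is positive semi-definite, giving the reverse estimate $H\leq (n-1)\dot\lambda_\epsilon(\underline u)/\lambda_\epsilon(\underline u)$; the same argument then produces
\[
\frac{d}{dt}\lambda_\epsilon(\underline u)\geq \frac{\lambda_\epsilon(\underline u)}{n-1},
\]
which integrates to $\lambda_\epsilon(\underline u(t))\geq e^{t/(n-1)}\lambda_\epsilon(\underline u(0))$, the leftmost inequality. The middle inequality $\lambda_\epsilon(\underline u(t))\leq \lambda_\epsilon(\overline u(t))$ is trivial since $\lambda_\epsilon$ is increasing.

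No serious obstacle is anticipated: the whole argument is essentially a one-line consequence of the parabolic maximum principle once the structure of $H$ at an extremum is noted. The only mildly delicate ingredient is Hamilton's trick for differentiating $\overline u(t)$ and $\underline u(t)$, but this is entirely standard for smooth solutions on a compact manifold, which is precisely the present setting.
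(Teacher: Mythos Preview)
Your proposal is correct and takes essentially the same approach as the paper, which states only that the estimate is ``an immediate consequence of the fact that, in view of (\ref{meancur}), the mean curvature is comparable to $(n-1)\dot\lambda_\epsilon/(W\lambda_\epsilon)$ at an extremal point of $v$.'' You have simply written out the details behind this one-line remark: the vanishing of $Dv$ and the sign of the Hessian at extrema, followed by Hamilton's trick and integration of the resulting ODE for $\lambda_\epsilon(\overline u)$ and $\lambda_\epsilon(\underline u)$.
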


We now turn to $C^1$ estimates. For this we first need  to control the quantity determining  the speed of the flow, namely, the mean curvature.

\begin{proposition}\label{conth}
As long as the mean curvature remains positive, it is bounded from above as follows:
\begin{equation}\label{conth2}
H\leq n-1 + O(e^{-\frac{2t}{n-1}}).
\end{equation}
\end{proposition}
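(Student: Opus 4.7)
The plan is to convert the evolution equation (\ref{evolvmean}) for $H$ into a scalar ODE inequality for $\overline H(t)=\sup_N H(t,\cdot)$, whose reaction term is dissipative toward the equilibrium value $n-1$. The key algebraic input is the Cauchy--Schwarz bound $|a|^2\geq H^2/(n-1)$ from (\ref{cauchy}), which transmutes the destabilizing term $-|a|^2/H$ in (\ref{evolvmean}) into an honest damping term.

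More concretely, inserting (\ref{cauchy}) into (\ref{evolvmean}) gives
\[
\frac{\partial H}{\partial t}\leq \frac{\Delta H}{H^2}-2\frac{|\nabla H|^2}{H^3}-\frac{H}{n-1}+\frac{n-1}{H},
\]
so that, after multiplying by $2H$, the function $u:=H^2-(n-1)^2$ satisfies a parabolic differential inequality whose zero-order part equals precisely $-\frac{2}{n-1}u$, up to terms involving $\Delta H$ and $|\nabla H|^2$. Since each $\Sigma_t$ is compact, $\overline H(t)$ is attained, and at a spatial maximum $\nabla H=0$ and $\Delta H\leq 0$. Hamilton's trick therefore yields the ODI
\[
\frac{d}{dt}\bigl(\overline H(t)^2-(n-1)^2\bigr)\leq -\frac{2}{n-1}\bigl(\overline H(t)^2-(n-1)^2\bigr)
\]
in the sense of upper Dini derivatives, whence a routine ODE comparison produces
\[
\overline H(t)^2\leq (n-1)^2+\max\bigl\{\overline H(0)^2-(n-1)^2,\,0\bigr\}\,e^{-\frac{2t}{n-1}}.
\]
Taking square roots and using the elementary inequality $\sqrt{(n-1)^2+\delta}\leq (n-1)+\delta/(2(n-1))$ for $\delta\geq 0$ yields (\ref{conth2}).

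The argument is essentially bookkeeping, and the one subtle point is Hamilton's trick applied to differentiate $\overline H(t)$, which is nevertheless standard for smooth parabolic flows on compact manifolds as long as $H$ remains positive (which is precisely the hypothesis of the proposition). Obtaining matching lower bounds on $H$, or sharper asymptotic control, would require more delicate barrier arguments, but is not needed here; the one-sided decay to the equilibrium $n-1$ is exactly what is required for the subsequent $C^1$ estimates in this section.
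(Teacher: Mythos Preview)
Your proof is correct and follows essentially the same route as the paper: apply \eqref{cauchy} to \eqref{evolvmean}, pass to $H^2$, and use the maximum principle to derive the scalar inequality $\frac{d}{dt}\overline\varrho\leq -\frac{2}{n-1}\overline\varrho$ for $\varrho=H^2-(n-1)^2$. The paper's argument is just a terser version of what you wrote.
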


\begin{proof}
It follows from (\ref{evolvmean}) and (\ref{cauchy}) that
\[
\frac{\partial H^2}{\partial t}\leq 2\frac{\Delta H}{H}-4
\frac{|\nabla H|^2}{H^2} -\frac{2}{n-1}{H^2}+2(n-1),
\]
which implies that $\varrho=H^2-(n-1)^2$ satisfies
\[
\frac{d \overline \varrho}{dt}\leq - \frac{2}{n-1}\overline \varrho.
\]
The result follows.
\end{proof}

If we differentiate  (\ref{imcfnonpar}) with respect to $v^kD_k$ then we obtain that the quantity
\begin{equation}\label{quantomeg}
\omega=\frac{|D v|_h^2}{2}
\end{equation}
evolves as
\begin{eqnarray}
\frac{\partial \omega}{\partial t} & = & \frac{\tilde h^{ij}}{W^2F^2}\omega_{ij}-
  \frac{1}{F^2}\frac{\partial F}{\partial v_i}\omega_i-2(n-2)\epsilon\frac{\omega}{W^2F^2}\nonumber\\
  & & \quad -\frac{1}{W^2F^2}\tilde h_{ij}h^{kl}v_{ij}v_{jl}-2(n-1) H^{-2}\omega,
\label{evolvomega}
\end{eqnarray}
where
\[
F=\frac{\lambda_\epsilon H}{W}.
\]

\begin{proposition}\label{apc1}
We have
\begin{equation}\label{apc12}
|D v|_h=O(e^{-\frac{t}{n-1}}),
\end{equation}
or equivalently,
\begin{equation}\label{apc123}
|D u|_h=O(1).
\end{equation}
In particular,
\begin{equation}\label{supernew}
W=1+O(e^{-\frac{2t}{n-1}}).
\end{equation}
\end{proposition}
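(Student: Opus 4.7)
The plan is to apply the parabolic maximum principle to the scalar quantity $\omega=|Dv|_h^2/2$ using the evolution equation (\ref{evolvomega}). At a spatial maximum of $\omega(t,\cdot)$ on $N$, the gradient term $F^{-2}(\partial F/\partial v_i)\omega_i$ vanishes and the diffusion term $(\tilde h^{ij}/(W^2F^2))\omega_{ij}$ is non-positive, and the quartic term $-(W^2F^2)^{-1}\tilde h_{ij}h^{kl}v_{ij}v_{jl}$ is a non-positive quadratic form in the Hessian of $v$. Hence along $t\mapsto \overline\omega(t)=\sup_N \omega(t,\cdot)$ one obtains the differential inequality
\[
\frac{d\overline\omega}{dt}\leq -\frac{2(n-2)\epsilon}{\lambda_\epsilon^2 H^2}\overline\omega-\frac{2(n-1)}{H^2}\overline\omega,
\]
after using $F=\lambda_\epsilon H/W$ so that $1/(W^2F^2)=1/(\lambda_\epsilon^2 H^2)$.

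The two preceding estimates take over from here. From Proposition \ref{conth} we have $H\leq n-1+O(e^{-2t/(n-1)})$ (and $H$ stays positive by hypothesis), whence
\[
-\frac{2(n-1)}{H^2}\leq -\frac{2}{n-1}+O(e^{-2t/(n-1)}).
\]
From Proposition \ref{c0est} we have $\lambda_\epsilon(u)\sim e^{t/(n-1)}$, so the term involving $\epsilon$ (which is only present and has the wrong sign when $\epsilon=-1$) is bounded by $C e^{-2t/(n-1)}\overline\omega$. Putting these together,
\[
\frac{d\overline\omega}{dt}\leq -\frac{2}{n-1}\overline\omega + C e^{-2t/(n-1)}\overline\omega.
\]
Since $\int_0^{+\infty}Ce^{-2s/(n-1)}ds<+\infty$, a Gronwall-type integration gives $\overline\omega(t)\leq C'\overline\omega(0)e^{-2t/(n-1)}$, i.e.\ $|Dv|_h=O(e^{-t/(n-1)})$, which is (\ref{apc12}). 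The equivalence with (\ref{apc123}) follows from (\ref{change2}), which gives $v_i=u_i/\lambda_\epsilon(u)$ and hence $|Dv|_h=\lambda_\epsilon(u)^{-1}|Du|_h$; combined with (\ref{c0est2}) this yields $|Du|_h=O(1)$. Finally, (\ref{supernew}) is immediate from $W=\sqrt{1+2\omega}$ and $\omega=O(e^{-2t/(n-1)})$.

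The main point to verify carefully is the handling of the term with $\epsilon=-1$: it contributes with the \emph{wrong} sign in the maximum principle argument, and without the decay $\lambda_\epsilon^{-2}\sim e^{-2t/(n-1)}$ coming from Proposition \ref{c0est}, one could not close the Gronwall estimate. The identity (\ref{usef}), used implicitly through $F=\lambda_\epsilon H/W$ and the explicit form of (\ref{meancur}), is what makes this cancellation effective. The non-positivity of the Hessian term $-(W^2F^2)^{-1}\tilde h_{ij}h^{kl}v_{ij}v_{jl}$ should also be checked: $\tilde h_{ij}$ is positive definite (the inverse of $\tilde h^{ij}=h^{ij}-v^iv^j/W^2$, which is positive definite since $|v^iv^j|_h/W^2<1$), and contracted with the square $h^{kl}v_{ik}v_{jl}$ it yields a non-negative quantity, so that the term has the favorable sign and can simply be discarded at the maximum.
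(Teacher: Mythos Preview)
Your approach is essentially the same as the paper's: apply the maximum principle to the evolution equation (\ref{evolvomega}) for $\omega$, discard the diffusion, gradient, and Hessian-square terms at a spatial maximum, and use (\ref{conth2}) and (\ref{c0est2}) to control the remaining zeroth-order terms, arriving at
\[
\frac{d\overline\omega}{dt}\le -\left(\frac{2}{n-1}+O(e^{-\frac{2t}{n-1}})\right)\overline\omega,
\]
from which Gronwall gives (\ref{apc12}).

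There is, however, one small logical slip in your write-up. You bound the $\epsilon$-term $-2(n-2)\epsilon\,\omega/(W^2F^2)=2(n-2)\,\omega/(\lambda_\epsilon^2 H^2)$ (for $\epsilon=-1$) separately by $Ce^{-2t/(n-1)}\overline\omega$. This implicitly assumes an upper bound on $H^{-2}$, i.e.\ a lower bound on $H$, which is \emph{not} yet available at this point in the paper; it is proved only in the proposition that follows. The correct way to proceed, and what the paper's one-line proof does, is to combine the two reaction terms \emph{before} estimating:
\[
-\frac{2(n-2)\epsilon}{\lambda_\epsilon^2 H^2}\,\omega-\frac{2(n-1)}{H^2}\,\omega
=\frac{1}{H^2}\left(\frac{2(n-2)}{\lambda_\epsilon^2}-2(n-1)\right)\omega\quad(\epsilon=-1).
\]
Since $\lambda_{-1}>1$ (hence $2(n-2)/\lambda_\epsilon^2<2(n-1)$), the bracket is negative, so only the \emph{upper} bound $H\le n-1+O(e^{-2t/(n-1)})$ from (\ref{conth2}) is needed to conclude that the right-hand side is at most $(-\tfrac{2}{n-1}+O(e^{-2t/(n-1)}))\overline\omega$. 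With this correction your argument is complete and coincides with the paper's.
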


\begin{proof}
From (\ref{evolvomega}), (\ref{conth2}) and (\ref{c0est2}),
\[
\frac{d\overline \omega}{dt}
   \leq  -\left(\frac{2}{n-1}+O(e^{-\frac{2t}{n-1}})\right)\overline \omega,
\]
and the assertion follows.
\end{proof}

The next proposition shows not only that strict mean convexity is preserved but also that the mean curvature remains bounded away from zero.

\begin{proposition}
The mean curvature is uniformly bounded from below by a positive constant.
\end{proposition}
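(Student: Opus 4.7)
The plan is to apply the parabolic maximum principle to an auxiliary function that couples $H$ with the ambient warping factor $\lambda_\epsilon$ and the gradient factor $W$. A direct maximum principle on $H$ (or on $H^{-1}$) via (\ref{evolvmean}) will not work: the term $-|a|^2/H$ has the wrong sign for a lower bound, and the umbilic inequality (\ref{cauchy}) only makes it worse. Indeed, a short computation yields
\[
\frac{\partial H^{-1}}{\partial t} \;=\; \frac{\Delta H^{-1}}{H^2} + \frac{|a|^2-(n-1)}{H^3},
\]
so at a spatial maximum of $H^{-1}$ the maximum principle only delivers an \emph{upper} bound on $|a|^2$, not on $H^{-1}$ itself.

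First I would introduce the auxiliary quantity
\[
\Phi \;=\; \frac{(n-1)\,\dot\lambda_\epsilon(u)}{\lambda_\epsilon(u)\,W\,H}.
\]
By (\ref{meancur}), $\Phi$ equals $1$ plus the second-order graphical correction $\tilde h^{ik}v_{ki}/(HW\lambda_\epsilon)$, so an upper bound $\overline\Phi(t)\le C$ is equivalent to a lower bound for $H$ modulo the already-established (\ref{c0estpre}) and (\ref{supernew}). Indeed, (\ref{c0est2}) together with (\ref{usef}) ensures that $\dot\lambda_\epsilon/\lambda_\epsilon$ is bounded away from $0$ (it tends to $1$ as $t\to+\infty$ and is bounded below on the initial range of $u$), while (\ref{supernew}) gives $W\to 1$. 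Consequently $\overline\Phi\le C$ yields $H\ge (n-1)\dot\lambda_\epsilon/(C\,\lambda_\epsilon W)\ge c_0>0$ uniformly in $t$.

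The core technical step is to derive the evolution equation for $\Phi$ using (\ref{par}), (\ref{evolvmean}), (\ref{evolvomega}) and the identities (\ref{usef})--(\ref{fund}). The chain rule on $\dot\lambda_\epsilon(u)/\lambda_\epsilon(u)$ produces terms of order $\epsilon/\lambda_\epsilon^2 = O(e^{-2t/(n-1)})$ by (\ref{c0estpre}), and the chain rule on $W$ yields terms proportional to $|Dv|_h^2 = O(e^{-2t/(n-1)})$ by (\ref{apc12}). The key point is to arrange that the $-|a|^2/H$ contribution in (\ref{evolvmean}) combines with these chain-rule terms and with the umbilic lower bound $|a|^2 \ge H^2/(n-1)$ from (\ref{cauchy}) to produce a coercive term of the form $-c\,\Phi$ in the evolution of $\Phi$. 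Granting this, one obtains a parabolic inequality of the form
\[
\frac{\partial \Phi}{\partial t} \;\le\; \frac{a^{ij}\,\Phi_{ij}}{H^2} + B^k\,\Phi_k - c\,\Phi + C_0\,e^{-2t/(n-1)}
\]
for constants $c, C_0>0$ depending only on the initial data and on the a priori bounds (\ref{c0est2}), (\ref{conth2}), (\ref{apc12}). Evaluating at a spatial maximum of $\Phi$ eliminates the elliptic and gradient terms and leads to
\[
\frac{d\overline\Phi}{dt} \;\le\; -c\,\overline\Phi + C_0\,e^{-2t/(n-1)},
\]
so that a standard Grönwall argument gives $\overline\Phi(t)\le \max\{\overline\Phi(0),\,C_0/c\}$ uniformly in $t\ge 0$, which by the equivalence explained above yields the desired uniform lower bound on $H$.

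The hard part will be the algebraic management of the $-|a|^2/H$ term: one needs to verify that, after the chain-rule computation, its coefficient in the evolution of $\Phi$ has the correct sign to be absorbed into the coercive $-c\Phi$ contribution. Here the upper bound (\ref{conth2}) on $H$ is crucial to prevent degenerate ratios, and the gradient bound (\ref{supernew}) is needed to replace $W$ by $1$ up to exponentially small errors. Once the coercive parabolic inequality for $\Phi$ is in place, the conclusion is a routine application of the parabolic maximum principle.
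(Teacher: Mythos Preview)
Your proposal has a genuine gap, and it is precisely the point you flag as ``the hard part'': the $|a|^2$ term does \emph{not} acquire a favorable sign in the evolution of your $\Phi$, and no amount of chain-rule bookkeeping will make it do so.

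Write $\Phi=(n-1)\dot\lambda_\epsilon/(\lambda_\epsilon WH)$ and recall $p=\lambda_\epsilon/W$, so that $1/W=p/\lambda_\epsilon$. A direct computation using $(\partial_t-\Delta/H^2)p=|a|^2p/H^2$ (which follows from (\ref{confo}), the Codazzi equation, and (\ref{evolvsupp})) and $(\partial_t-\Delta/H^2)(1/H)=(|a|^2-(n-1))/H^3$ shows that in the evolution of $1/(WH)$ the two $|a|^2$ contributions \emph{add} rather than cancel:
\[
(\partial_t-\tfrac{\Delta}{H^2})\,\tfrac{1}{WH}
=\frac{2|a|^2-(n-1)}{WH^3}+(\text{lower order}).
\]
The extra factor $\dot\lambda_\epsilon/\lambda_\epsilon$ contributes only $O(e^{-2t/(n-1)})$ by (\ref{usef}) and (\ref{c0estpre}), so it cannot help. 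At a spatial maximum of $\Phi$ you are therefore left with a term $+2|a|^2/H^2$ with the wrong sign, and the umbilic inequality $|a|^2\ge H^2/(n-1)$ only makes it larger. An upper bound on $|a|^2$ would rescue the argument, but that is Proposition~\ref{c2est}, which in the paper is proved \emph{after} the present proposition and in fact uses the lower bound on $H$; invoking it here would be circular.

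The paper's proof avoids this obstruction by working instead with $\kappa=\log\!\bigl(1/(pH)\bigr)$. Because $(\partial_t-\Delta/H^2)p=|a|^2p/H^2$ while $(\partial_t-\Delta/H^2)(1/H)$ contributes $+|a|^2/H^3$, the product rule for $1/(pH)$ makes the two $|a|^2$ terms cancel exactly, yielding
\[
\frac{\partial\kappa}{\partial t}=\frac{\Delta\kappa}{H^2}-\frac{|\nabla\log p^{-1}|^2}{H^2}+\frac{|\nabla\log H|^2}{H^2}-\frac{n-1}{H^2}.
\]
At a spatial maximum of $\kappa$ the two gradient terms also cancel (since $\nabla\kappa=0$ forces $\nabla\log H=\nabla\log p^{-1}$), and the upper bound (\ref{conth2}) on $H$ turns $-(n-1)/H^2$ into $-1/(n-1)+O(e^{-2t/(n-1)})$. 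Integrating gives $1/(pH)\le Ce^{-t/(n-1)}$, and since $p\sim e^{t/(n-1)}$ one concludes $H\ge c_0>0$. The essential idea you are missing is this cancellation, which is tied to the fact that $p$ is the support function of the conformal Killing field $\overline\nabla\rho_\epsilon$; the factor $1/W$ you chose is $p/\lambda_\epsilon$, which carries $|a|^2$ with the opposite sign.
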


\begin{proof}
First note that (\ref{c0est2}) and (\ref{apc12}) imply  $p \sim e^{\frac{t}{n-1}}$.
Also, we have that $\kappa=\log (1/pH)$ satisfies
\[
\frac{\partial \kappa}{\partial t}=\frac{\Delta \kappa}{H^2}-\frac{|\nabla \log p^{-1}|^2}{H^2}+
\frac{|\nabla \log H|^2}{H^2} -\frac{n-1}{H^2},
\]
so (\ref{conth2}) gives
\[
\frac{d\overline \kappa}{dt}\leq -\frac{1}{n-1}+O(e^{-\frac{2t}{n-1}}).
\]
Hence, $e^\kappa\leq O(e^{-\frac{t}{n-1}})$, which gives $H^{-1}\leq O(1)$.
\end{proof}

Now we are ready to establish $C^2$ apriori estimates for the flow.

\begin{proposition}\label{c2est}
The shape operator  remains uniformly bounded as long as the flow exists.
\end{proposition}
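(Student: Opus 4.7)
The plan is to bound the eigenvalues of the shape operator $a$ by applying the parabolic maximum principle to the tensor $b=Ha$, whose evolution was recorded in (\ref{simmons}). Since Proposition \ref{conth} gives $H\leq n-1+O(e^{-2t/(n-1)})$ and the previous proposition ensures $H\geq c>0$, a uniform bound on the eigenvalues of $b$ is equivalent to a uniform bound on those of $a$. My strategy is therefore first to derive an upper bound on the maximum eigenvalue of $b$ via Hamilton's trick, and then to deduce a lower bound on the minimum principal curvature from control of the trace $H$.

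Let $\mu(t)=\max_{x\in N}\mu_{\max}(b(t,x))$ denote the spatially maximal top eigenvalue of $b^i_j$. Differentiating at a point-and-direction where this maximum is attained and applying (\ref{simmons}), the Laplacian term is non-positive by the spatial maximum principle, so one obtains
\begin{equation*}
\frac{d\mu}{dt}\leq -\frac{2\mu^2}{H^2}+\frac{2(n-1)\mu}{H^2}+E(t),
\end{equation*}
where $E(t)$ collects the first-order-in-$b$ terms that were omitted in (\ref{simmons}). These terms arise from commuting covariant derivatives on $\Sigma_t$, from intrinsic derivatives of the ambient potential $\rho_\epsilon$, and from the Riemann curvature of the LH background (which itself is algebraic in $\rho_\epsilon$ and $\lambda_\epsilon$ by (\ref{usef})); they are of degree at most one in $b$ and of degree zero or one in $Du$. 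Using Propositions \ref{c0est} and \ref{apc1} together with the two-sided bound on $H$, one estimates $|E(t)|\leq c_2\mu+c_3$ uniformly in $t$. Since $H^{-2}$ is pinched between positive constants, this yields an ODE inequality
\begin{equation*}
\frac{d\mu}{dt}\leq -c_1\mu^2+c_2'\mu+c_3',
\end{equation*}
which forces $\mu(t)$ to stay below the larger root of the right-hand side, giving a uniform upper bound on $\mu$.

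Since the eigenvalues of $b$ are $\kappa_i H$, where the $\kappa_i$ are the principal curvatures of $\Sigma_t$, the upper bound on $\mu$ combined with $H\geq c>0$ gives $\kappa_i\leq C$ for every $i$. Using $\sum_i\kappa_i=H\leq n-1+o(1)$ one then obtains $\kappa_j=H-\sum_{i\neq j}\kappa_i\geq H-(n-2)C\geq -C'$, which completes the two-sided bound and hence the uniform estimate on $|a|$. I expect the main technical obstacle to be the explicit tracking of the omitted lower-order terms in (\ref{simmons}): one must verify, using the Gauss-Codazzi equations for hypersurfaces in the LH warped product $(P_\epsilon,g_\epsilon)$ of (\ref{change}), that every such term is indeed controlled by $c_2\mu+c_3$ under the already-established $C^0$ and $C^1$ estimates. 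This bookkeeping parallels the treatment in \cite{BHW}, \cite{Ge1} and \cite{D} for similar warped-product backgrounds and should go through without surprises.
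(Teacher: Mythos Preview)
Your proposal is correct and follows essentially the same route as the paper: apply the parabolic maximum principle to the evolution (\ref{simmons}) of $b=Ha$, use the dominant $-2\mu^2/H^2$ term to bound the top eigenvalue $\mu$, and then recover a lower bound from control of the trace (the paper phrases this last step as $|b|\leq C(\mu+1)$, which is the same observation as your $\kappa_j=H-\sum_{i\neq j}\kappa_i$ written at the level of $b$). The only difference is that you retain the omitted lower-order terms as an explicit $E(t)$ and bound them by $c_2\mu+c_3$, whereas the paper simply drops them; either way the quadratic term wins and the conclusion is identical.
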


\begin{proof}
Let $\mu$ be the maximal eigenvalue of $b=Ha$. Clearly,
$|b|\leq C(\mu+1)$ for some constant $C>0$. On the other hand, (\ref{simmons}) gives
\[
\frac{d\overline \mu}{dt}\leq -2\frac{\overline \mu^2}{H^2}+2(n-1)\frac{\overline \mu}{H^2},
\]
so $\mu$ is uniformly bounded from above. Thus, $b$ is bounded and since $H$ is uniformly bounded, $a$ is bounded as well.
\end{proof}

The following result establishes the longtime existence for solutions of the inverse mean curvature flow (\ref{imcfpar}) and describes the asymptotics of the graphing function $u$ in a way that suffices for our purposes.

\begin{theorem}\label{longtime}
For any initial hypersurface which is star-shaped and strictly mean convex, the solution of  (\ref{imcfpar}) exists for all time $t>0$. Also, the evolving hypersurface expands toward infinity, remaining star-shaped and strictly mean convex. Moreover, there exists $\alpha\in \mathbb R$ so that the rescaled  function
\[
\tilde u=u-\frac{t}{n-1}
\]
converges to $\alpha$
in the sense that
\begin{equation}\label{sharpest}
|D \tilde u|_h+|D^2 \tilde u|_h=o(1).
\end{equation}
\end{theorem}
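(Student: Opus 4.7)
\smallskip
\noindent\textbf{Proof plan.} My strategy is to split the argument into two stages. First, I would promote the local solution of (\ref{imcfpar}) to a global one by combining the a priori estimates already established in Section \ref{imcflh} with standard parabolic regularity. Second, I would sharpen the pointwise bounds into exponentially decaying ones, integrate in $t$, and unwind the change of variables $v=\varphi(u)$ to obtain both the uniform limit $\tilde u\to\alpha$ and the decay of its first and second derivatives.

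For long-time existence, the $C^0$, $C^1$ and $C^2$ bounds of Propositions \ref{c0est}, \ref{apc1} and \ref{c2est}, together with the uniform positive lower bound on $H$ proved in the proposition immediately preceding \ref{c2est}, show that (\ref{imcfnonpar}) is a uniformly parabolic scalar equation for $v$ with coefficients bounded in $L^\infty$. A Krylov--Safonov argument gives $C^\alpha$ control on $v_t$ and $D^2v$, and Schauder theory iterated then delivers $C^{k,\alpha}$ estimates on every finite time interval for each $k$. The standard continuation principle for quasilinear parabolic equations rules out finite-time blow-up, so the flow exists for all $t>0$. Star-shapedness and strict mean convexity are preserved by the uniform $C^1$ bound of Proposition \ref{apc1} and the $H$-lower bound, while Proposition \ref{c0est} guarantees that the hypersurface expands toward infinity.

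For the asymptotic behaviour, I would write $\partial_t\tilde u=W/H-1/(n-1)$ and exploit the estimate $W=1+O(e^{-2t/(n-1)})$ already contained in Proposition \ref{apc1}. The remaining task is a matching two-sided bound $H=(n-1)+O(e^{-\delta t})$ for some $\delta>0$, whose upper half is Proposition \ref{conth}. For the lower half I would analyze the traceless part $\stackrel{\circ}{b}=b-\tfrac{H}{n-1}I$ of the rescaled shape operator $b=Ha$: equation (\ref{simmons}) combined with the umbilicity gap (\ref{cauchy}) should give a parabolic inequality for $|\stackrel{\circ}{b}|^2$ whose zeroth-order coefficient is strictly negative and uniformly bounded away from zero, forcing $|\stackrel{\circ}{b}|=O(e^{-\delta t})$. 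Feeding this back into (\ref{evolvmean}) yields $H\geq(n-1)-Ce^{-\delta t}$, and integrating the resulting $O(e^{-\delta t})$ estimate for $\partial_t\tilde u$ in $t$ produces the uniform limit $\tilde u\to\alpha$. The identities $u_i=\lambda_\epsilon v_i$ and $\tilde h^{ik}v_{kj}=\dot\lambda_\epsilon\delta^i_j-W\lambda_\epsilon a^i_j$, combined with the exponential convergence of $a$ to the identity multiple $(\dot\lambda_\epsilon/\lambda_\epsilon)I$, then translate the decay of $\stackrel{\circ}{b}$ into the stated $|D\tilde u|_h+|D^2\tilde u|_h=o(1)$.

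The main obstacle, in my view, is the quantitative umbilicity estimate $|\stackrel{\circ}{b}|=O(e^{-\delta t})$ with an explicit exponential rate. Proposition \ref{c2est} only yields a pointwise $L^\infty$ bound on the shape operator, whereas what is needed here is strict exponential decay of its traceless part. The proof requires exploiting the Simons-type structure implicit in the term $\nabla^2H/H^2$ of (\ref{evolvshape}) and applying the parabolic maximum principle to the difference between the largest and smallest principal curvatures of $b$, which are only Lipschitz in general; either a viscosity-type formulation or a symmetric-function perturbation is needed to render this rigorous. Once this estimate is available, the remainder of the proof reduces to bootstrapping on inequalities already at hand in Section \ref{imcflh}.
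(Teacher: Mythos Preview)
Your long-time existence argument is essentially what the paper does: the a priori estimates of Propositions \ref{c0est}--\ref{c2est} and the uniform lower bound on $H$ feed into standard parabolic regularity to exclude finite-time breakdown.

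For the asymptotic part, however, the paper takes a noticeably different and more economical route than yours. You propose to reach the two-sided estimate $H=(n-1)+O(e^{-\delta t})$ by first proving an exponential umbilicity decay $|\stackrel{\circ}{b}|=O(e^{-\delta t})$ via a Simons-type inequality and a tensor/eigenvalue maximum principle, and then to translate this into the decay of $D\tilde u$ and $D^2\tilde u$ through the algebraic relations between $a$, $v$ and $u$. The paper instead bypasses umbilicity entirely: it applies the scalar maximum principle directly to $\tau=W/H=\lambda_\epsilon/(pH)$ to obtain $W/H=\tfrac{1}{n-1}+O(te^{-2t/(n-1)})$ (Proposition \ref{vanish}), introduces the auxiliary scalar $w=(e^u+1)e^{-t/(n-1)}$ and integrates its time derivative to get pointwise convergence (Proposition \ref{limit}), and finally observes that since $\partial_t w$ and $|Dw|_h$ decay exponentially while all higher derivatives of $w$ are uniformly bounded by parabolic regularity, interpolation forces $|D^2w|_h\to 0$ (Proposition \ref{final}). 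The umbilicity estimate you highlight as the crux appears in the paper only as a remark (Remark \ref{huwrem}), derived \emph{after} (\ref{vanish2}), not as an input to it.

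The trade-off: your approach, if pushed through, yields more geometric information (explicit umbilicity rates), but it requires the delicate eigenvalue/tensor maximum-principle step you flag, and you would still need to check that the rate $\delta$ you obtain exceeds $1/(n-1)$ so that $\lambda_\epsilon|D^2v|_h\to 0$. The paper's argument stays at the level of scalar quantities and a soft interpolation step, avoiding that difficulty altogether.
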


The longtime existence follows by applying Parabolic Regularity Theory \cite{Ge2} \cite{Li} in the usual way to the above apriori estimates. Also, these same  estimates guarantee that  star-shapedness and strict mean convexity are preserved along the flow.
We now complete the asymptotic analysis by showing that the first and second order  derivatives of $\tilde u$ actually vanish at infinity as indicated in (\ref{sharpest}). A key ingredient here is a sharper control on the mean curvature.

\begin{proposition}\label{vanish}
We have
\begin{equation}\label{vanishwh}
\frac{W}{H}= \frac{1}{n-1} + O(te^{-\frac{2t}{n-1}}).
\end{equation}
In particular,
\begin{equation}\label{vanish2}
H=n-1 +O(te^{-\frac{2t}{n-1}}).
\end{equation}
\end{proposition}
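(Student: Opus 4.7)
The plan is to apply the parabolic maximum principle to $\Psi = H^2 - (n-1)^2$, whose evolution follows from (\ref{evolvmean}) after decomposing $|a|^2 = H^2/(n-1) + |a_0|^2$, where $a_0$ denotes the traceless part of the shape operator. A direct calculation yields
\[
\frac{\partial \Psi}{\partial t} = \frac{2\Delta H}{H} - \frac{4|\nabla H|^2}{H^2} - \frac{2\Psi}{n-1} - 2|a_0|^2.
\]
At the spatial maximum the term $-2|a_0|^2$ is favorable and one recovers the upper bound $\overline \Psi \leq Ce^{-2t/(n-1)}$ of Proposition \ref{conth}. The new content is the symmetric lower bound on $\Psi$, for which the sign of $-2|a_0|^2$ now cuts the wrong way, and the crux is to bound this term from above.

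The key step is therefore to establish
\[
\sup_\Sigma |a_0|^2 = O(e^{-2t/(n-1)}).
\]
I would attempt this directly from the explicit formula
\[
a^i_j = \frac{\dot\lambda_\epsilon}{W\lambda_\epsilon}\delta^i_j - \frac{1}{W\lambda_\epsilon}\tilde h^{ik}v_{kj},
\]
observing that only the second summand contributes to $a_0$ and is explicitly damped by the factor $(W\lambda_\epsilon)^{-1} \sim e^{-t/(n-1)}$ coming from Proposition \ref{c0est} and (\ref{supernew}). Combined with the $C^1$ estimate (\ref{apc12}) and the uniform $C^2$ bound of Proposition \ref{c2est}, this should furnish the required decay of the spread of principal curvatures. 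Should this direct estimate fall short, the alternative is to derive a parabolic inequality for $|a_0|^2$ from (\ref{evolvshape}); after substituting $a = a_0 + (H/(n-1))I$ the cubic cross-terms combine to produce a dissipation $-\tfrac{4}{n-1}|a_0|^2$ on the right-hand side, which by the maximum principle forces $\sup|a_0|^2$ to decay at least as fast as $e^{-4t/(n-1)}$, easily sufficient.

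With $\sup_\Sigma|a_0|^2 \leq Ce^{-2t/(n-1)}$ in hand, the maximum principle at the spatial minimum of $\Psi$ yields the linear ODE inequality
\[
\frac{d\underline\Psi}{dt} \geq -\frac{2}{n-1}\underline\Psi - Ce^{-2t/(n-1)}.
\]
Because the forcing decays at precisely the rate of the homogeneous solution, variation of constants produces the resonant bound $\underline\Psi \geq -Cte^{-2t/(n-1)}$, which is exactly where the linear factor $t$ in the statement originates. Combined with the upper bound this gives $|H^2-(n-1)^2| = O(te^{-2t/(n-1)})$, and since $H$ is already known to be uniformly bounded above and below away from zero, (\ref{vanish2}) follows. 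The expansion (\ref{vanishwh}) of $W/H$ is then immediate from this together with $W = 1 + O(e^{-2t/(n-1)})$ from (\ref{supernew}).

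The main obstacle is the refined estimate on $|a_0|^2$. The a priori $C^2$ bound of Proposition \ref{c2est} only gives boundedness of the full shape operator; extracting the stated decay of its traceless part requires either careful structural use of the $\lambda_\epsilon^{-1}$ damping in the formula for $a^i_j$, or a dedicated maximum principle argument on the Simons-type evolution of $|a_0|^2$ whose dissipation term cancels against the first-order pieces in (\ref{evolvshape}).
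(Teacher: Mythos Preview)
Your strategy differs from the paper's and, as written, has a gap at exactly the point you flag as the main obstacle.

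The paper never touches $|a_0|^2$. It works instead with $\tau=W/H=\lambda_\epsilon/(pH)$, citing \cite{BHW} for its evolution, and obtains at the spatial maximum
\[
\frac{d}{dt}\log\overline\tau \leq \frac{2}{n-1}-2\overline\tau+O(e^{-2t/(n-1)}),
\]
where the $O(e^{-2t/(n-1)})$ error comes only from the $C^0$ and $C^1$ estimates already in hand (the $\epsilon/\lambda_\epsilon^2$ corrections implicit in (\ref{usef}) and Propositions~\ref{c0est}--\ref{apc1}). This is a Riccati inequality with attracting equilibrium $1/(n-1)$; the resonant forcing yields $\overline\tau\leq 1/(n-1)+O(te^{-2t/(n-1)})$. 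The reverse inequality is immediate from $W\geq 1$ and the upper bound on $H$ already established in Proposition~\ref{conth}. No umbilicity information is required; indeed, in the paper the umbilicity estimate (Remark~\ref{huwrem}) is \emph{derived from} (\ref{vanish2}) rather than used to prove it.

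Your route through $\Psi=H^2-(n-1)^2$ forces you to control $|a_0|^2$ beforehand, and neither of your proposals does this. In approach~1 the damping $(W\lambda_\epsilon)^{-1}\sim e^{-t/(n-1)}$ in $(a_0)^i_j=-(W\lambda_\epsilon)^{-1}(\tilde h^{ik}v_{kj})_0$ is exactly cancelled: at this stage the only second-order information is the uniform bound on $a$ from Proposition~\ref{c2est}, which gives $|\tilde h^{ik}v_{kj}|=O(\lambda_\epsilon)$ and hence only $|a_0|=O(1)$; the $C^1$ estimate (\ref{apc12}) on $|Dv|$ carries no second-order content. In approach~2 the linear dissipation $-\tfrac{4}{n-1}|a_0|^2$ is indeed present, but the evolution of $|a_0|^2$ also contains the cubic term $-2H^{-1}\mathrm{tr}(a_0^3)$, of order $|a_0|^3$, together with gradient terms that require a Simons-type identity to absorb into $\Delta|a_0|^2$. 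Without an a priori smallness of $|a_0|$ the cubic term can dominate the dissipation, and your sketch does not address this; making it work would be at least as much effort as the paper's direct argument on $\tau$.
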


\begin{proof}
As explained in \cite{BHW}, it follows from the evolution equation for the uniformly bounded quantity $\tau=W/H=\lambda_\epsilon/pH$ that
\[
\frac{d}{dt}\log \overline\tau\leq \frac{2}{n-1}-2\overline \tau +O(e^{-\frac{2t}{n-1}}),
\]
so that
\begin{eqnarray*}
\frac{d\overline \tau}{dt} & \leq & \frac{2\overline \tau}{n-1}-2\overline \tau^2+O(e^{-\frac{2t}{n-1}})\\
&\leq & O(e^{-\frac{2t}{n-1}}),
\end{eqnarray*}
whenever $\tau\geq 1/(n-1)$. This gives
\[
\frac{W}{H}\leq \frac{1}{n-1}+ O(te^{-\frac{2t}{n-1}}),
\]
and the reverse inequality follows from (\ref{conth2}) and (\ref{supernew}).
\end{proof}

\begin{remark}\label{huwrem}
{\rm Using the  arguments in \cite{BHW} it can be shown that (\ref{vanish2}) leads to a rather precise control on the shape operator, namely,
\[
|a^i_j-\delta^i_j|\leq O(t^2e^{-\frac{2t}{n-1}}).
\]
In words, the evolving hypersurface
 becomes totally umbilical at an exponential rate.}
\end{remark}

So far we have essentially  followed \cite{BHW} and \cite{D}. Inspired by \cite{Ge1} we now introduce the uniformly bounded function
\begin{equation}\label{defw}
w=(e^u+1)e^{-\frac{t}{n-1}}=e^{\tilde u}+e^{-\frac{t}{n-1}}.
\end{equation}

\begin{proposition}\label{limit}
The functions $w$ and $\tilde u$ converge pointwisely as $t\to +\infty$.
\end{proposition}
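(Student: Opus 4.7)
The plan is to integrate, in $t$, the pointwise evolution equation for $\tilde u$, using the sharp mean curvature control from Proposition \ref{vanish}. From the nonparametric flow equation (\ref{par}) we have $\partial u/\partial t = W/H$, so
\[
\frac{\partial \tilde u}{\partial t} \;=\; \frac{W}{H} \,-\, \frac{1}{n-1}.
\]
Proposition \ref{vanish} bounds the right-hand side by $O(te^{-2t/(n-1)})$, uniformly in $\theta \in N$. Since $\int_0^\infty t\, e^{-2t/(n-1)}\, dt < \infty$, for each fixed $\theta \in N$ the function $t \mapsto \tilde u(t,\theta)$ is Cauchy as $t \to +\infty$, and therefore converges to some limit $\alpha(\theta)$; uniformity of the $O$-bound in $\theta$ in fact upgrades this to uniform convergence on $N$.

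For $w$, the second identity in (\ref{defw}) reads $w = e^{\tilde u} + e^{-t/(n-1)}$. Since the first summand converges pointwise to $e^{\alpha(\theta)}$ by continuity of $\exp$, and the second tends to $0$, we conclude that $w(t,\theta) \to e^{\alpha(\theta)}$ pointwise as $t\to +\infty$.

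There is essentially no serious obstacle here — the whole argument reduces to integrating an $L^1$ function of $t$. The only point worth flagging is that the decay rate $O(te^{-2t/(n-1)})$ delivered by Proposition \ref{vanish} is just enough to ensure integrability on $[0,+\infty)$; a weaker bound such as $O(1/t)$ would not suffice. The logarithmic-type factor $t$ that appears is harmlessly absorbed by the exponential. Note that Proposition \ref{limit} only asserts pointwise convergence; the finer fact that $\alpha$ is constant on $N$ (as expected from Theorem \ref{longtime}) would follow by combining this with the spatial gradient estimate $|D\tilde u|_h = o(1)$, but is not needed here.
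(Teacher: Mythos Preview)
Your proof is correct. Both your argument and the paper's hinge on the same estimate, Proposition~\ref{vanish}, which gives $W/H - 1/(n-1) = O(te^{-2t/(n-1)})$, but you deploy it slightly differently. You work directly with $\tilde u$, observe that $\partial\tilde u/\partial t$ is integrable in $t$ uniformly in $\theta$, and conclude pointwise (in fact uniform) convergence by the Cauchy criterion; the convergence of $w$ then follows from the identity $w = e^{\tilde u} + e^{-t/(n-1)}$. The paper instead proves convergence of $w$ first: it computes $\partial w/\partial t$ from (\ref{par}), uses (\ref{vanishwh}) and (\ref{c0est2}) to show that $\partial w/\partial t$ is bounded below by an integrable negative function, so that $w$ plus an exponentially decaying correction becomes monotone nondecreasing; since $w = O(1)$, the monotone convergence theorem applies, and the convergence of $\tilde u$ follows from (\ref{defw}). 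Your route is arguably the more direct of the two, since it bypasses the monotonicity trick entirely and reads off convergence straight from integrability.
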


\begin{proof}
By (\ref{defw}) it suffices to prove the assertion for $w$.
By (\ref{par}) we get
\begin{equation}\label{evolvew}
\frac{\partial w}{\partial t}  =  \frac{W}{H}{e^u}e^{-\frac{t}{n-1}}-\frac{w}{n-1},
\end{equation}
so that (\ref{vanishwh}) and (\ref{c0est2}) give
\[
\frac{\partial w}{\partial t}  =
   \frac{e^u}{n-1}e^{-\frac{t}{n-1}}-\frac{w}{n-1}+ O(te^{-\frac{2t}{n-1}}),
\]
and hence,
\begin{equation}\label{evolvv2}
\frac{\partial w}{\partial t} =
 -\frac{1}{n-1}e^{-\frac{t}{n-1}}+O(te^{-\frac{2t}{n-1}}).
\end{equation}
Thus, for all $t$ large enough there holds
\[
\frac{\partial w}{\partial t}\geq -\frac{2}{n-1}e^{-\frac{t}{n-1}},
\]
or
\[
\frac{\partial}{\partial t}\left(w-e^{-\frac{2t}{n-1}}\right)\geq 0.
\]
The assertion follows since $w=O(1)$.
\end{proof}

We now prove that the limiting functions above are actually  constant. This is the content of the next result, which also proves (\ref{sharpest}) and concludes the proof of Theorem \ref{longtime}.

\begin{proposition}\label{final}
There holds
\begin{equation}\label{estfinal}
|D w|_h+|D^2 w|_h=o(1).
\end{equation}
\end{proposition}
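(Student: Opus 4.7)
The plan is to upgrade the pointwise convergence of Proposition \ref{limit} to uniform $C^2$-convergence of $w$, from which (\ref{estfinal}) is immediate. The strategy rests on three ingredients: (a) the integrated form of the pointwise identity (\ref{evolvv2}), which, thanks to $\theta$-uniformity of the error terms, yields $\|w(t,\cdot) - w_\infty\|_{C^0(N)} = O(e^{-t/(n-1)})$ and thus upgrades the pointwise convergence in Proposition \ref{limit} to uniform convergence at an exponential rate; (b) the quantitative umbilicity estimate of Remark \ref{huwrem}, which supplies a polynomial-in-$t$ bound on the Hessian of $w$; and (c) the Landau--Kolmogorov interpolation inequality on the closed manifold $(N,h)$,
\[
\|D\phi\|_{C^0(N)}^2 \leq C\bigl(\|\phi\|_{C^0(N)}\,\|D^2\phi\|_{C^0(N)} + \|\phi\|_{C^0(N)}^2\bigr),
\]
valid for every $C^2$ function $\phi$ on $N$.

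For the Hessian estimate, I rewrite the shape operator formula from Subsection \ref{hyperlh} as $\tilde h^{ik} v_{kj} = (\dot\lambda_\epsilon - W\lambda_\epsilon)\delta^i_j - W\lambda_\epsilon(a^i_j - \delta^i_j)$. The asymptotic identities $\dot\lambda_\epsilon - \lambda_\epsilon = O(\lambda_\epsilon^{-1})$ (from (\ref{usef})) and $W - 1 = O(e^{-2t/(n-1)})$ (from (\ref{supernew})) combine via $\lambda_\epsilon \sim e^{t/(n-1)}$ to give the crucial cancellation $\dot\lambda_\epsilon - W\lambda_\epsilon = O(e^{-t/(n-1)})$, while Remark \ref{huwrem} contributes $|a^i_j - \delta^i_j| = O(t^2 e^{-2t/(n-1)})$. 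Together these yield $|v_{ij}|_h = O(t^2 e^{-t/(n-1)})$. Translating via $u_i = \lambda_\epsilon v_i$ and the chain rule $u_{ij} = \dot\lambda_\epsilon u_j v_i + \lambda_\epsilon v_{ij}$, one obtains $|D^2 u|_h = O(t^2)$, and multiplying by $e^{\tilde u}$ (uniformly bounded by (\ref{c0est2})) yields the polynomial-in-$t$ Hessian bound $\|D^2 w\|_{C^0(N)} = O(t^2)$.

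Applying the interpolation inequality to $\phi = w(t,\cdot) - w_\infty$, the $C^0$-rate from (a) and the Hessian bound from (b) combine to give
\[
\|Dw - Dw_\infty\|_{C^0(N)}^2 \leq C\bigl(e^{-t/(n-1)} \cdot O(t^2) + e^{-2t/(n-1)}\bigr) = O(t^2 e^{-t/(n-1)}) = o(1).
\]
Asymptotic umbilicity of the evolving hypersurfaces (Remark \ref{huwrem}) identifies the limit profile with a slice $\{r\} \times N \subset P_\epsilon$, on which the defining graph function $u$---and therefore $w$---is constant; hence $w_\infty$ is constant, $Dw_\infty \equiv 0$, and the first half of (\ref{estfinal}) follows. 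A second application of the same interpolation inequality to each component of $Dw$, using a polynomial-in-$t$ bound on $\|D^3 w\|_{C^0(N)}$ supplied by standard parabolic bootstrapping from the second-order estimates, then gives $|D^2 w|_h = o(1)$, completing the proof.

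The main technical obstacle is the extraction of the sharp Hessian rate $|v_{ij}|_h = O(t^2 e^{-t/(n-1)})$ from Remark \ref{huwrem}: the exponential growth of $\lambda_\epsilon$ threatens to overwhelm the decay of $a^i_j - \delta^i_j$, and it is only the algebraic cancellation $\dot\lambda_\epsilon - W\lambda_\epsilon = O(e^{-t/(n-1)})$ (itself a delicate interplay between (\ref{usef}) and (\ref{supernew})) that preserves the necessary decay. Once this is in place, the remaining interpolation step and identification of the limit slice are routine.
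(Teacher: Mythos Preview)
Your argument contains a genuine circularity at the step where you invoke asymptotic umbilicity to conclude that $w_\infty$ is constant. The hypersurfaces $\Sigma_t$ escape to infinity in $P_\epsilon$, so there is no limit hypersurface in the ambient space to which Remark \ref{huwrem} could apply; the statement ``the limit profile is a slice'' has content only \emph{after} one knows that the rescaled graphing function $\tilde u$ (equivalently $w$) converges in $C^1$ to a constant --- which is precisely half of (\ref{estfinal}). Put differently, the estimate $|a^i_j-\delta^i_j|=O(t^2e^{-2t/(n-1)})$ is perfectly compatible with $\tilde u$ converging to a \emph{nonconstant} smooth function of small oscillation, so umbilicity alone cannot force $Dw_\infty\equiv 0$. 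Your interpolation step, applied to $\phi=w(t,\cdot)-w_\infty$, therefore presupposes exactly what it is meant to establish. Note also that the interpolation would in any case require $w_\infty\in C^2$, which you have not shown: your bound $\|D^2w\|_{C^0}=O(t^2)$ is polynomially \emph{growing}, so Arzel\`a--Ascoli does not supply a $C^2$ (or even $C^1$) limit.

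The paper takes a different and more robust route: it observes that (\ref{evolvew}) is a \emph{uniformly} parabolic equation for $w$ with uniformly bounded solution, and then invokes parabolic Schauder theory to obtain uniform bounds on \emph{all} higher spatial derivatives of $w$. With those bounds in hand, the exponential decay of $\partial_tw$ and of $|Dw|_h$ (the latter coming from (\ref{apc123}) together with the structure of the equation) feeds a standard interpolation to force $|D^2w|_h\to 0$. Your explicit Hessian computation via Remark \ref{huwrem} is a nice idea and yields correct intermediate estimates, but it cannot replace the regularity machinery: you eventually appeal to ``parabolic bootstrapping'' for $D^3w$ anyway, so you may as well use it one step earlier to obtain uniform $C^k$ bounds on $w$ directly, as the paper does, and thereby sidestep the circular appeal to the shape of the limit.
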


\begin{proof}
First note that (\ref{change2}), (\ref{meancur}) and our previous estimates imply that (\ref{evolvew}) is a {\em uniformly} parabolic equation for $w$. Also, (\ref{apc123}) and (\ref{evolvv2}) imply that both $\partial w/\partial t$ and $|Dw|_h$ decay exponentially. Since $w$ remains uniformly bounded in time, parabolic regularity \cite{Ge2} \cite{Li} implies that all higher order derivatives of $w$ are uniformly bounded as well. Standard interpolation inequalities then guarantee that $|D^2w|_h$ decays exponentially, which proves (\ref{estfinal}).
\end{proof}

\section{The proofs of Theorems \ref{aftype} and \ref{theomain}}
\label{proof}

As remarked in the Introduction, the Penrose inequality (\ref{penrose}) in Theorem \ref{theomain} is an immediate consequence of the Alexandrov-Fenchel-type inequality (\ref{aftype2}) in Theorem \ref{aftype}. We now explain how the above results on the asymptotics of the inverse mean curvature flow may be used to prove  (\ref{aftype2}). A first ingredient is a Heintze-Karcher-type inequality due to Brendle \cite{Br}.

\begin{proposition}\label{bren}
If $\Sigma\subset P_\epsilon$ is star-shaped and strictly mean convex then
\begin{equation}\label{bren2}
(n-1)\int_\Sigma\frac{\dot\lambda_\epsilon}{H}d\Sigma\geq \int_\Sigma pd\Sigma.
\end{equation}
Moreover, if the equality holds then $\Sigma$ is totally umbilical.
\end{proposition}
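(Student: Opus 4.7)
The plan is to follow Brendle's approach \cite{Br}, which exploits the conformal Hessian identity (\ref{confo}) together with a weighted Reilly-type integral formula on the domain enclosed by $\Sigma$. As a preliminary reformulation, let $\Omega \subset P_\epsilon$ denote the bounded region with $\partial \Omega = \Sigma$, well defined by the star-shapedness assumption, and let $\nu = -\xi$ be the outward unit normal. Tracing (\ref{confo}) gives $\overline\Delta \rho_\epsilon = n \rho_\epsilon$, so the divergence theorem applied to $\overline\nabla \rho_\epsilon$ yields
\[
\int_\Sigma p\, d\Sigma = \int_\Sigma \langle \overline\nabla \rho_\epsilon, \nu \rangle\, d\Sigma = n \int_\Omega \rho_\epsilon\, d\vol.
\]
Hence (\ref{bren2}) is equivalent to the volume-form inequality $(n-1) \int_\Sigma \rho_\epsilon/H\, d\Sigma \geq n \int_\Omega \rho_\epsilon\, d\vol$.

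Next I would introduce $f \in C^\infty(\overline\Omega)$ as the unique solution of the weighted Dirichlet problem
\[
\mathrm{div}(\rho_\epsilon \overline\nabla f) = n \rho_\epsilon\ \text{in}\ \Omega, \qquad f = 0\ \text{on}\ \Sigma.
\]
Equivalently, the drift Laplacian satisfies $\overline\Delta f + \langle \overline\nabla \log \rho_\epsilon, \overline\nabla f \rangle = n$, and the divergence theorem applied to $\rho_\epsilon \overline\nabla f$ gives $\int_\Sigma \rho_\epsilon \langle \overline\nabla f, \nu \rangle\, d\Sigma = n \int_\Omega \rho_\epsilon\, d\vol$, matching the boundary integral of $p$ computed above.

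The heart of the argument is a weighted Bochner/Reilly identity for $f$ on $\Omega$, integrated against the measure $\rho_\epsilon\, d\vol$. Two structural features of $(P_\epsilon, g_\epsilon)$ enter crucially here: the constant sectional curvature $-1$ gives $\overline{\mathrm{Ric}} = -(n-1) g_\epsilon$, while (\ref{confo}) makes the Bakry-Emery correction associated with the weight $\log \rho_\epsilon$ combine with the Ricci term so that the bulk curvature contribution simplifies. Combining the pointwise trace Cauchy-Schwarz $|\overline\nabla^2 f|^2 \geq (\overline\Delta f)^2/n$, the divergence identity recorded above, and a final weighted Cauchy-Schwarz on $\Sigma$ (comparing $\int_\Sigma \rho_\epsilon \langle \overline\nabla f, \nu\rangle\, d\Sigma$ with $\int_\Sigma \rho_\epsilon H \langle \overline\nabla f, \nu\rangle^2\, d\Sigma$ and $\int_\Sigma \rho_\epsilon/H\, d\Sigma$) should then yield the volume-form inequality, and hence (\ref{bren2}).

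The equality case is handled by tracing back through these inequalities: equality in the pointwise Hessian bound forces $\overline\nabla^2 f$ to be a multiple of $g_\epsilon$, which via (\ref{confo}) and $f|_\Sigma = 0$ pins $f$ as a scalar combination of $\rho_\epsilon$ and a constant, and equality in the boundary Cauchy-Schwarz then translates into $\Sigma$ being totally umbilical in $P_\epsilon$. The principal obstacle I anticipate is executing the weighted Reilly computation so that all constants align and no spurious bulk terms survive. The conformal Hessian identity (\ref{confo}) is used in an essential way here: without the exact proportionality $\overline\nabla^2 \rho_\epsilon = \rho_\epsilon g_\epsilon$ the bulk curvature integrals would not cancel cleanly, and one would obtain at best a suboptimal constant in (\ref{bren2}).
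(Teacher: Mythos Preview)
Your proposal and the paper's proof are the same in spirit: both invoke Brendle's Heintze--Karcher inequality \cite{Br}. The paper simply records that $(P_\epsilon,g_\epsilon)$, with static potential $\rho_\epsilon=\dot\lambda_\epsilon$, satisfies Brendle's structural hypotheses and quotes the result; you instead sketch the mechanism behind Brendle's argument (the auxiliary Dirichlet problem, the weighted Reilly/Bochner identity, and the Cauchy--Schwarz steps). So there is no genuinely different route here.

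One point in your sketch deserves care. You write ``let $\Omega\subset P_\epsilon$ denote the bounded region with $\partial\Omega=\Sigma$,'' but $P_\epsilon$ is not a manifold without boundary in which $\Sigma$ alone bounds. For $\epsilon=-1$ the region inside $\Sigma$ has a second boundary component, the slice $r=1$ (equivalently $s=\log 2$), where $\rho_{-1}=\dot\lambda_{-1}=0$; for $\epsilon=0$ the inner end $r\to 0$ is a cusp at infinite distance. In Brendle's scheme this is exactly the ``horizon'' boundary, and the vanishing of the static potential there (together with a Neumann-type condition on $f$) is what kills the extra boundary contributions in the weighted Reilly identity. Your divergence-theorem step $\int_\Sigma p\,d\Sigma = n\int_\Omega \rho_\epsilon\,d\vol$ and the solvability/regularity of the Dirichlet problem for $f$ both rely on this, so you should state the inner boundary condition explicitly rather than asserting $\partial\Omega=\Sigma$.
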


\begin{proof}
The static warped product $(P_\epsilon,g_\epsilon)$, with $g_\epsilon$ given by (\ref{change}), has $\rho_\epsilon=\dot\lambda_\epsilon$ as its static potential and satisfies the general structure conditions in \cite{Br}.
\end{proof}

We now determine the asymptotic behavior of certain integral quantities associated to solutions of the flow (\ref{imcfpar}). If $\Sigma\subset P_\epsilon$ is closed we define
\[
\Ic(\Sigma)=\int_\Sigma \rho_\epsilon Hd\Sigma=\int_\Sigma \dot\lambda_\epsilon Hd\Sigma,
\]
\[
\Jc(\Sigma)=\int_\Sigma pd\Sigma),
\]
\[
\Kc(\Sigma)=\vartheta_{n-1}\Ac(\Sigma)^{\frac{n}{n-1}},
\]
where $\Ac(\Sigma)={|\Sigma|}/{\vartheta_{n-1}}$ is the normalized area of $\Sigma$,
and
\[
\Lc(\Sigma)=\frac{\Ic(\Sigma)-(n-1)\Kc(\Sigma)}{\Ac(\Sigma)^{\frac{n-2}{n-1}}}.
\]

\begin{proposition}\label{asymbe}
If $\Sigma_t$ is a solution of (\ref{imcfpar}) with $\Sigma_0$ star-shaped and strictly mean convex then
\begin{equation}\label{asymbe1}
\lim_{t\to +\infty}\frac{\Jc}{\Ac^{\frac{n}{n-1}}}(\Sigma_t)=\vartheta_{n-1},
\end{equation}
and
\begin{equation}\label{asymbe2}
\liminf_{t\to +\infty}\Lc(\Sigma_t)\geq (n-1)\vartheta_{n-1}\epsilon.
\end{equation}
\end{proposition}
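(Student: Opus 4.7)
I will derive an exact integral identity for $\Ic(\Sigma_t)$ which cleanly isolates its leading ($\Jc$-like) part, an $\epsilon$-correction of order $\int_N\lambda_\epsilon^{n-2}\,dN$, and a residual that vanishes on slices. Both assertions of the proposition then follow from the $C^2$-asymptotics of the graphing function $u$ furnished by Theorem \ref{longtime}, combined (for (\ref{asymbe2})) with a H\"older inequality to control the sign of $\Jc-\Kc$.

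For the identity, rewrite (\ref{meancur}) as $H\lambda_\epsilon W = (n-1)\dot\lambda_\epsilon(u) - \tilde h^{ik}v_{ki}$. Substituting into $\Ic = \int_N \rho_\epsilon(u)\,H\,\lambda_\epsilon^{n-1}(u)\,W\,dN$ (since $d\Sigma=\lambda_\epsilon^{n-1}W\,dN$) and using $\rho_\epsilon=\dot\lambda_\epsilon$ together with (\ref{usef}), one finds
\[
\Ic(\Sigma_t) = (n-1)\Jc(\Sigma_t) + (n-1)\epsilon\int_N \lambda_\epsilon^{n-2}(u)\,dN - \int_N \dot\lambda_\epsilon(u)\,\lambda_\epsilon^{n-2}(u)\,\tilde h^{ik}v_{ki}\,dN,
\]
where I have recognized $\Jc=\int_N\lambda_\epsilon^n\,dN$ from $p\,d\Sigma=\lambda_\epsilon^n\,dN$. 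It is the warping identity $\dot\lambda_\epsilon^2=\lambda_\epsilon^2+\epsilon$ that brings in the distinguished $\epsilon$-correction.

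By Theorem \ref{longtime}, $u(t,\cdot)=\tfrac{t}{n-1}+\alpha+o(1)$ uniformly on $N$ with $|Du|_h+|\nabla^2 u|_h=o(1)$; consequently $\lambda_\epsilon(u)=C_{\epsilon,\alpha}e^{t/(n-1)}(1+o(1))$ uniformly for an explicit positive constant $C_{\epsilon,\alpha}$, and $W\to 1$ uniformly (in fact $W-1=O(|Dv|_h^2)=o(\lambda_\epsilon^{-2})$). Hence for every $k\geq 0$,
\[
\int_N \lambda_\epsilon^k(u)\,dN = C_{\epsilon,\alpha}^k\,\vartheta_{n-1}\,e^{kt/(n-1)}(1+o(1)),\qquad |\Sigma_t|=C_{\epsilon,\alpha}^{n-1}\,\vartheta_{n-1}\,e^t(1+o(1)).
\]
Assertion (\ref{asymbe1}) is now immediate: $\Jc$ and $\vartheta_{n-1}\Ac^{n/(n-1)}$ are both asymptotic to $C_{\epsilon,\alpha}^n\,\vartheta_{n-1}\,e^{nt/(n-1)}$, so their quotient tends to $\vartheta_{n-1}$.

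For (\ref{asymbe2}), divide the identity by $\Ac^{(n-2)/(n-1)}\sim C_{\epsilon,\alpha}^{n-2}e^{(n-2)t/(n-1)}$. The middle term yields $(n-1)\epsilon\vartheta_{n-1}$ in the limit by the same computation used for (\ref{asymbe1}). The residual term is $o(1)$: using $v_i=u_i/\lambda_\epsilon$ and the $C^2$ bounds on $u$, one checks $|\tilde h^{ik}v_{ki}|=o(\lambda_\epsilon^{-1})$ uniformly, so the integral is $o(\int_N \lambda_\epsilon^{n-2}\,dN)=o(\Ac^{(n-2)/(n-1)})$. The first term $(n-1)(\Jc-\Kc)/\Ac^{(n-2)/(n-1)}$ has non-negative $\liminf$: H\"older applied to $\int_N\lambda_\epsilon^{n-1}\cdot 1\,dN$ gives $\vartheta_{n-1}(\int_N\lambda_\epsilon^{n-1}\,dN/\vartheta_{n-1})^{n/(n-1)}\leq \Jc$, while $\Kc=\vartheta_{n-1}(\int_N\lambda_\epsilon^{n-1}W\,dN/\vartheta_{n-1})^{n/(n-1)}$ differs from this H\"older minorant by a positive term that, thanks to $W-1=o(\lambda_\epsilon^{-2})$, is of size $o(\Ac^{(n-2)/(n-1)})$. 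Therefore $\Jc-\Kc\geq -o(\Ac^{(n-2)/(n-1)})$ and the $\liminf$ of the first term is $\geq 0$, giving (\ref{asymbe2}).

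The principal technical obstacle is to ensure the two sub-leading pieces — the $W>1$ gap between $\Kc$ and its H\"older minorant, and the anisotropy term $\int_N \dot\lambda_\epsilon\lambda_\epsilon^{n-2}\tilde h^{ik}v_{ki}\,dN$ — are both strictly of order $o(\Ac^{(n-2)/(n-1)})$. This relies crucially on the \emph{uniform} $o(1)$ decay of $|Du|_h$ and $|\nabla^2 u|_h$ provided by Theorem \ref{longtime} (and not merely pointwise convergence of $\tilde u$), since the asymptotic size of each correction is dictated by how $|Du|_h$ and $W-1$ compare with negative powers of $\lambda_\epsilon$.
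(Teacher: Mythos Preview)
Your proposal is correct and follows essentially the same route as the paper. Both arguments rest on the formula (\ref{meancur}) for $H$, the warping identity $\dot\lambda_\epsilon^2=\lambda_\epsilon^2+\epsilon$, the $C^2$-asymptotics of $\tilde u$ from Theorem \ref{longtime}, and the H\"older/Jensen inequality $\left(\fint\lambda_\epsilon^{n-1}\right)^{n/(n-1)}\le\fint\lambda_\epsilon^n$ to control the sign of the $\Jc-\Kc$ piece. The only organizational difference is that you first derive the \emph{exact} identity
\[
\Ic=(n-1)\Jc+(n-1)\epsilon\int_N\lambda_\epsilon^{n-2}\,dN-\int_N\dot\lambda_\epsilon\lambda_\epsilon^{n-2}\,\tilde h^{ik}v_{ki}\,dN
\]
and then pass to the limit, whereas the paper first expands each integrand with explicit $o(e^{\cdot})$ remainders and only afterwards splits $\dot\lambda_\epsilon^2$; this buys you a slightly cleaner bookkeeping of the error terms (the $W$-factor is absorbed exactly into $\Jc$ via $p\,d\Sigma=\lambda_\epsilon^n\,dN$), but the analytic content is identical.
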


\begin{proof}
First note that (\ref{sharpest}) gives
\[
|D u|_h+|D^2u|_h=o(1),
\]
and hence
\[
|D v|_h+|D^2v|_h=o(e^{-\frac{t}{n-1}}).
\]
Thus,
\[
W^{-1}=1-\frac{1}{2}|D v|_h^2 +o(e^{-\frac{4t}{n-1}}),
\]
so that (\ref{supp}) gives
\[
p=\lambda_\epsilon-\frac{\lambda_\epsilon}{2}|D v|_h^2+o(e^{-\frac{3t}{n-1}}).
\]
On the other hand, (\ref{met1}) gives
\begin{equation}\label{expvol}
\sqrt{\det \eta}=\lambda_\epsilon^{n-1}\sqrt{\det h}\left(1+\frac{1}{2}|D v|_h^2+o(e^{-\frac{4t}{n-1}})\right).
\end{equation}
We thus obtain the expansions
\[
\Jc(\Sigma_t)=\vartheta_{n-1}\fint\lambda_\epsilon^n+o(e^{\frac{(n-4)t}{n-1}}),
\]
\begin{equation}\label{exp2}
\Ac(\Sigma_t)=\fint \lambda_\epsilon^{n-1}+o(e^{\frac{(n-3)t}{n-1}})
\end{equation}
and
\[
\Ac(\Sigma_t)^{\frac{n}{n-1}}=\left(\fint \lambda_\epsilon^{n-1}\right)^{\frac{n}{n-1}}+
o(e^{\frac{(n-2)t}{n-1}}),
\]
where
$
\fint
=\frac{1}{\vartheta_{n-1}}\int
$
and $\int$ means  integration  over $N$ with respect to $h$.
In particular,
\begin{equation}\label{inp}
\frac{\Jc}{\Ac^{\frac{n}{n-1}}}=\frac{\vartheta_{n-1}\fint\lambda_\epsilon^n}
  {\left(\fint\lambda_\epsilon^{n-1}\right)^{\frac{n}{n-1}}+o(e^{\frac{(n-2)t}{n-1}})}
  +o(e^{-\frac{4t}{n-1}}).
\end{equation}
Using standard mean value theorems it is easy to check that
\[
\left|\frac{\fint\lambda_\epsilon^n}{\left(
 \fint\lambda_\epsilon^{n-1}\right)^{\frac{n}{n-1}}}-1\right|\leq C\sup_N|D v|_h=o(1),
\]
so that
\begin{equation}\label{same}
\lim_{t\to +\infty}\frac{\fint\lambda_\epsilon^n}{\left(
 \fint\lambda_\epsilon^{n-1}\right)^{\frac{n}{n-1}}}=1,
\end{equation}
and this together with (\ref{inp}) proves (\ref{asymbe1}).

We now prove (\ref{asymbe2}). From (\ref{meancur}) we get
\[
\dot\lambda_\epsilon H=(n-1)\frac{\dot\lambda_\epsilon^2}{\lambda_\epsilon}+o(e^{-\frac{t}{n-1}}),
\]
and recalling (\ref{expvol}) we obtain
\[
\int_\Sigma \rho_\epsilon H d\Sigma=(n-1)\int\dot\lambda_\epsilon^2\lambda_\epsilon^{n-2} +o(e^{\frac{(n-2)t}{n-1}}).
\]
On the other hand, from (\ref{exp2}) we have
\[
\Ac(\Sigma_t)^{\frac{n-2}{n-1}}=\left(\fint \lambda_\epsilon^{n-1}\right)^{\frac{n-2}{n-1}}+o(e^{\frac{(n-4)t}{n-1}}).
\]
Hence, from (\ref{usef}) we get
\begin{eqnarray*}
\liminf_{t\to +\infty} \Lc(\Sigma_t) & = & (n-1)\vartheta_{n-1}\liminf_{t\to +\infty}
   \frac{\fint\dot\lambda_\epsilon^2\lambda_\epsilon^{n-2}-
     \left(\fint\lambda_\epsilon^{n-1}\right)^{\frac{n}{n-1}}+o( e^{\frac{(n-2)t}{n-1}})}{\left(\fint \lambda_\epsilon^{n-1}\right)^{\frac{n-2}{n-1}}+o(e^{\frac{(n-4)t}{n-1}})}\\
& \geq & (n-1)\vartheta_{n-1}\epsilon\liminf_{t\to +\infty}
    \frac{\fint\lambda_\epsilon^{n-2}}{\left(\fint \lambda_\epsilon^{n-1}\right)^{\frac{n-2}{n-1}}+o(e^{\frac{(n-4)t}{n-1}})}+\\
    & & \quad + (n-1)\vartheta_{n-1}\liminf_{t\to +\infty}\frac{\fint \lambda_\epsilon^n-
     \left(\fint \lambda_\epsilon^{n-1}\right)^{\frac{n}{n-1}}}
     {\left(\fint \lambda_\epsilon^{n-1}\right)^{\frac{n-2}{n-1}}+o(e^{\frac{(n-4)t}{n-1}})}+\\
     & & \quad\quad +(n-1)\vartheta_{n-1}\liminf_{t\to +\infty}
     \frac{o(e^{\frac{n-2}{n-1}})}{\left(\fint \lambda_\epsilon^{n-1}\right)^{\frac{n-2}{n-1}}+o(e^{\frac{(n-4)t}{n-1}})}.
\end{eqnarray*}
Since $\lambda_\epsilon(u)\sim e^{\frac{t}{n-1}}$, the last limit on the right-hand side vanishes. Also, the argument leading to (\ref{same}) shows that the first one is $1$. Finally, by Jensen's inequality,
\[
\left(\fint\lambda_\epsilon^{n-1}\right)^{\frac{n}{n-1}}\leq\fint (\lambda_\epsilon^{n-1})^{\frac{n}{n-1}}=\fint\lambda_\epsilon^n,
\]
which implies that the second limit is nonnegative. This proves (\ref{asymbe2}).
\end{proof}

We now identify monotone quantities
along solutions of the inverse mean curvature flow.

\begin{proposition}\label{mono}
Along solutions of (\ref{imcfpar}) we have
\begin{equation}\label{mono1}
\frac{d}{dt}\left(\frac{\Jc-\Kc}{\Ac^{\frac{n}{n-1}}}\right)\geq 0,
\end{equation}
and
\begin{equation}\label{mono2}
\frac{d\Lc}{dt}\leq 0.
\end{equation}
Moreover, if the  equality holds in any of these inequalities then $\Sigma$ is totally umbilical.
\end{proposition}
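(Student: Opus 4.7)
The plan is to differentiate the two ratios directly, using the evolution equations of Proposition \ref{evolequations} together with the Minkowski identity (\ref{minko}) and Brendle's Heintze-Karcher-type inequality (Proposition \ref{bren}).

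For (\ref{mono1}), I would first observe that $\Kc/\Ac^{n/(n-1)}\equiv\vartheta_{n-1}$ is constant in time, so its derivative drops out. Inserting $\Jc'=n\int_\Sigma\rho_\epsilon/H\,d\Sigma$ from (\ref{evolvsuppint}) and $\Ac'=\Ac$ from (\ref{evolvarea}) collapses the remaining derivative to
\[
\frac{d}{dt}\!\left(\frac{\Jc-\Kc}{\Ac^{n/(n-1)}}\right)=\frac{n}{(n-1)\Ac^{n/(n-1)}}\!\left[(n-1)\int_\Sigma\frac{\rho_\epsilon}{H}\,d\Sigma-\Jc\right]\!,
\]
and the bracket is nonnegative by Proposition \ref{bren}, with equality forcing $\Sigma_t$ to be totally umbilical.

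The main computational step for (\ref{mono2}) is to control $\Ic'$. Combining $(\rho_\epsilon)_t=p/H$ from (\ref{evolvstat}), $(d\Sigma)_t=d\Sigma$, and (\ref{evolvmean}), I would use the identity $\Delta H/H^2-2|\nabla H|^2/H^3=-\Delta(1/H)$ to integrate by parts on the closed hypersurface $\Sigma_t$, and then apply the Minkowski identity (\ref{minko}) to produce the cancellation yielding
\[
\Ic'=2\Jc+\Ic-\int_\Sigma\rho_\epsilon\,\frac{|a|^2}{H}\,d\Sigma.
\]
The Cauchy-Schwarz inequality (\ref{cauchy}) then gives $\Ic'\leq 2\Jc+\tfrac{n-2}{n-1}\Ic$, with equality characterising umbilicity. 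A direct logarithmic differentiation of $\Lc$ using $\Kc'=\tfrac{n}{n-1}\Kc$ produces
\[
\frac{d\Lc}{dt}\leq\frac{2(\Jc-\Kc)}{\Ac^{(n-2)/(n-1)}}.
\]

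To upgrade this to $d\Lc/dt\leq 0$, I would then feed (\ref{mono1}) back in: by (\ref{mono1}) the ratio $(\Jc-\Kc)/\Ac^{n/(n-1)}$ is nondecreasing in $t$, while (\ref{asymbe1}) together with $\Kc/\Ac^{n/(n-1)}\equiv\vartheta_{n-1}$ shows it tends to $0$ as $t\to+\infty$ along the long-time flow provided by Theorem \ref{longtime}; a nondecreasing function with vanishing limit is nonpositive, so $\Jc\leq\Kc$ throughout the flow and (\ref{mono2}) follows. Equality at some instant $t$ then forces both $\Jc=\Kc$ and saturation of (\ref{cauchy}), so $\Sigma_t$ is umbilical. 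The one step I expect to demand care is the cancellation in the computation of $\Ic'$, which is precisely where the static warped-product structure of $(P_\epsilon,g_\epsilon)$ enters through (\ref{minko}).
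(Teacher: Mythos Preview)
Your proposal is correct and follows essentially the same route as the paper. The only cosmetic difference is that you write $\Ic'=2\Jc+\Ic-\int_\Sigma\rho_\epsilon|a|^2/H\,d\Sigma$ and invoke (\ref{cauchy}), whereas the paper records the equivalent identity $\Ic'=2\int_\Sigma\rho_\epsilon K/H\,d\Sigma+2\Jc$ with $K=\tfrac12(H^2-|a|^2)$ the extrinsic scalar curvature and phrases the same pointwise inequality as Newton--McLaurin; the intermediate step $\Jc\leq\Kc$ via (\ref{mono1}) and (\ref{asymbe1}) and the final chain leading to $d\Lc/dt\leq 0$ are identical.
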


\begin{proof}
From (\ref{evolvarea}), (\ref{evolvsuppint}) and (\ref{bren2}) we get
\[
\frac{d}{dt}\left(\Jc-\Kc\right)\geq \frac{n}{n-1}\left(\Jc-\Kc\right),
\]
which immediately implies (\ref{mono1}).
In particular, if we let a star-shaped and strictly mean convex hypersurface $\Sigma\subset P_\epsilon$ flow under (\ref{imcfpar}) and use (\ref{asymbe1}) we deduce that
\begin{equation}\label{indep}
\Jc(\Sigma)\leq \Kc(\Sigma).
\end{equation}
Moreover, if the equality holds then $\Sigma$ is totally umbilical.

On the other hand, a straightforward computation using Proposition \ref{evolequations} gives
\[
\frac{d\Ic}{dt}=2\int_\Sigma\frac{\rho K}{H}d\Sigma +2\Jc,
\]
where $K$ is the {\em extrinsic} scalar curvature of $\Sigma$, i.e. the sum of products of its principal curvatures. It then follows from Newton-McLaurin's inequality that
\[
\frac{d\Ic}{dt}\leq \frac{n-2}{n-1}\Ic +2\Jc,
\]
so that
\begin{eqnarray*}
\frac{d}{dt}\left(\Ic-(n-1)\Kc\right) & \leq & \frac{n-2}{n-1}(\Ic-(n-1)\Kc) +2(\Jc-\Kc)\\
  &\leq & \frac{n-2}{n-1}(\Ic-(n-1)\Kc),
\end{eqnarray*}
where we used (\ref{indep}) in the last step.
This easily yields (\ref{mono2}) and completes the proof.
\end{proof}

The proof of Theorem \ref{aftype} now follows straightforwardly if we  let $\Sigma$ flow under (\ref{imcfpar}). Combining (\ref{mono2}) and (\ref{asymbe2})
we obtain  $\Lc(\Sigma)\geq (n-1)\vartheta_{n-1}\epsilon$, which is just a rewriting of the Alexandrov-Fenchel-type inequality (\ref{aftype2}). Moreover, if the equality holds then $\Sigma$ is totally umbilical. In particular, $\Sigma$ has constant mean curvature and results in \cite{Mo}  imply that $\Sigma$ is a slice. This proves Theorem \ref{aftype}. Finally, since any mean convex hypersurface can be arbitrarily approximated by strictly mean convex ones, the Penrose-type inequality (\ref{penrose}) holds. In case of  equality, it follows from (\ref{massform}) that $\Sigma$ satisfies
\begin{equation}\label{ellip}
R_g=-n(n-1)
\end{equation}
everywhere. As explained in \cite{dLG4}, the argument in \cite{HW}
can be adapted to show that $\Sigma$ is an elliptic solution of (\ref{ellip}).
Thus, Alexandrov reflection can be used in the vertical direction to make sure it is congruent to the graph realization of $(P_{\epsilon,m},g_{\epsilon,m})$ described in (\ref{embed}), which is an elliptic solution as well. This completes the proof of Theorem \ref{theomain}.

\end{document}